\begin{document}
\maketitle

\section{Introduction}

The major inconvenience of using stable laws is the absence of expressions for probability density and distribution function in terms of elementary functions. There are only five cases known when the density is expressed in terms of elementary functions: the L$\acute{e}$vy distribution ($\alpha=1/2,\theta=1$)  symmetric L$\acute{e}$vy distribution ($\alpha=1/2,\theta=-1$), Cauchy distribution ($\alpha=1,\theta=0$), The Gaussian distribution  ($\alpha=2,\theta=0$) and asymmetric Cauchy distribution ($\alpha=1,-1\leqslant\theta\leqslant1$) (see formulas (\ref{eq:gx_a1}) and (\ref{eq:Gx_a1})). Here $\alpha$ is characteristic exponent of a stable law, $\theta$ - is  a parameter of asymmetry. The latter distribution first came out in the book by V.M. Zolotarev \cite{Zolotarev1986} (see formula (2.3.5a)) and later was examined in the works \cite{Saenko2020c,Saenko2020b}.   Different representations for stable laws are required to calculate the probability density or distribution function in other cases.

The paper \cite{Zolotarev1995_en}  shows that if  values  of the characteristic exponent  $\alpha$ and the asymmetry parameter $\beta$ are limited by values of  rational numbers ($\alpha=P/Q$, $\beta=U/V$, where $P,Q,V$ are positive integers), then in this case it is possible to express the probability density of a strictly stable law in terms of special functions. The papers \cite{Schneider1986, Schneider1987,HoffmannJorgensen1994,Zolotarev1995_en,Penson2010,Gorska2011,Pogany2015} are  devoted to obtaining such representations. The limitation of this approach lies in the fact that it is possible to obtain an expression for the probability density only for rational values of the parameters  $\alpha$ and $\beta$, and only for strictly stable laws. The application of the Fast Fourier Transform algorithm is another method of calculating density.  This approach has been examined in the papers \cite{Mittnik1999,Menn2006}.   However, this method gives an opportunity to calculate the probability density on a grid of equidistant points. In the paper linear interpolation must be used to calculate the density at intermediate points or at irregularly spaced points.

The use of integral representations is the main method for calculating the probability density and the distribution function of stable laws. This approach is based on the inversion formula (\ref{eq:InverseFormula}).  There are two possible ways of inverting the characteristic function. The first way is to directly calculate the integral in (\ref{eq:InverseFormula}). As a result, the probability density is expressed in terms of the integral of the oscillating function \cite{Nolan1999,Ament2018}. However, since the integrand is an oscillating function, this leads to difficulties in numerical integration in the cases $\alpha<0.75$, $\beta\neq0$ and $0<|\alpha-1|<0.001$ and in the case of large values $x$ \cite{Nolan1999}. Modernization of the standard quadrature method of numerical integration makes it possible to reduce the lower boundary of the parameter $\alpha$ from the value $0.75$ to the value  $0.5$ \cite{Ament2018}.  It is proposed to use the representation of the density in the form of a power series to calculate the density for large values of $x$.

The second way of obtaining integral representations is the application of the stationary phase method when calculating the integral in (\ref{eq:InverseFormula}) (see \cite{Zolotarev1964_en,Zolotarev1986,Nolan1997,Saenko2020b}). The advantage of this method of inverting the characteristic function is that the resulting integral representation is expressed in terms of a definite integral of a monotonic function. Such integral representations were obtained for stable laws with different parameterizations of the characteristic function: for parameterization ``B" in the works   \cite{Zolotarev1964_en,Zolotarev1986}, for parameterization  ``M" in the paper \cite{Nolan1997}, for parameterization ``C" in the paper \cite{Saenko2020b}. (Here, the notation of various parameterizations of the characteristic function is given in accordance with the designations introduced in the book by V.M. Zolotarev \cite{Zolotarev1986}.) These integral representations are more convenient from a practical point of view and allow calculating the density in a wide range of parameter values $\alpha,\beta$ and coordinates $x$. The integral representation obtained in the work \cite{Nolan1997} served as a foundation for developing several software products \cite{Liang2013,Royuela-del-Val2017,Julian-Moreno2017,Rimmer2005,Veillette2008}.

From a theoretical point of view, these integral representations are valid for all values of $x$. However, in practice, it is not possible to calculate the probability density and distribution function for all values of $x$. The reason for this lies in the behavior of the integrand.  The integrand has the form of a very sharp peak with small and large values of $x$. As a result, numerical integration algorithms cannot correctly calculate the integral in this range of $x$. To settle this issue in the papers \cite{Royuela-del-Val2017,Julian-Moreno2017,Nolan1997} it is proposed to use various numerical methods to increase the accuracy of calculations. However, all proposed approaches increase the accuracy of the calculation, but do not completely eliminate the problem.  To calculate the probability density and distribution function in this range of values of $x$ it is expedient to use other representations for stable laws which do not have any specific features in the indicated areas. The approach used in the papers  \cite{Ament2018,Menn2006} seems to be the most suitable which consists in applying expansions in a power series for probability density and distribution function with $x\to0$ and $x\to\infty$.

Such expansions are well known and are obtained, as a rule, for parametrization ``B". Depending on the value of the parameter $\alpha$ the obtained power series is either convergent or asymptotic.  The expansion of the probability density of a stable law into a convergent series in the case $x\to\infty$ and $0<\alpha<1$, was firstly mentioned in the paper \cite{Pollard1946}. Later, in the paper\cite{Bergstrom1952}  a generalization of this density expansion was given for $x\to\infty$ in the case $1<\alpha<2$. In this range of values of the parameter $\alpha$ this series turns out to be asymptotic. In the same paper, the expansion of the density in a series in the vicinity of the point  $x\to0$ was obtained for the case $0<\alpha<2$.  The resulting power series is asymptotic in the case $0<\alpha<1$, and convergent in the case $1<\alpha<2$. Expansions for $\alpha>1$ in the cases $x\to0$ and $x\to\infty$ were also obtained in the work \cite{Schneider1986} as a result of expansion into a power series of the probability density, expressed in terms of the Fox function.  The same expansions were given in the books \cite{Feller1971_V2_en} (see Chapter 17, \S7) and \cite{Zolotarev1986} (see \S2.4 and \S2.5).  Expansions of the density of a stable law in a power series for the characteristic function in parameterization ``M" were obtained in the paper \cite{Ament2018}. An interesting result was obtained in the paper \cite{Arias-Calluari2017}. In this paper, expansions in power series were obtained for the probability density of a symmetric stable law at $x\to0$ and $x\to\infty$ for the cases $0<\alpha<1$ and $1<\alpha<2$. A distinctive property of this expansion is that these power series for all $\alpha$ are convergent.

The purpose of this work is to obtain power series expansions of the probability density and distribution function of a strictly stable law with the characteristic function
\begin{equation}\label{eq:CF_formC}
  \hat{g}(t,\alpha,\theta,\lambda)=\exp\left\{-\lambda |t|^\alpha\exp\{-i\tfrac{\pi}{2}\alpha\theta\sign t\}\right\},\quad t\in\mathbf{R},
\end{equation}
where $\alpha\in(0,2]$, $|\theta|\leqslant\min(1,2/\alpha-1)$, $\lambda>0$. This parameterization of the characteristic function, according to the book \cite{Zolotarev1986}, is called parameterization ``C". Obtaining such expansions turns out to be necessary in connection with the problem of calculating the probability density and distribution function of stable and fractionally stable laws. In fact, in the article \cite{Saenko2020b} integral representations were obtained for the probability density and distribution function of a strictly stable law with the characteristic function (\ref{eq:CF_formC}). Since these integral representations were obtained using the stationary phase method, then with small and large values of the coordinate $x$ the integrand has the form of a very sharp peak.   This causes difficulties for numerical integration algorithms and leads to incorrect integration results. Therefore, to calculate the probability density and distribution function in these coordinate regions, it is expedient to use representations in the form of a power series for the corresponding quantities. This work is devoted to obtaining such expansions.

The solution to this problem will turn out to be useful not only when calculating the density of strictly stable laws but also in the task of calculation the density and distribution function of a fractional-stable law \cite{Kolokoltsov2001,Bening2006,Saenko2020c}. These distributions are expressed in terms of the Mellin convolution of two strictly stable laws. Correct calculation of the probability density will make it possible to use an algorithm for statistical estimation of the parameters of these laws based on the maximum likelihood method. Such an algorithm for estimating parameters will give an opportunity to correctly describe various experimental data. It is known that the distribution of gene expression is described by laws with a power-law decrease in density \cite{Ueda2004a,Hoyle2002,Furusawa2003}. Since the stable and fractionally stable densities decrease according to the power law $x^{-\alpha-1}$ at $x\to \infty$, then these classes of distributions were used to describe the distribution of gene expression. In the works \cite{Saenko2015,Saenko2015a} fractional stable distributions were used to describe the expression of genes obtained using microarray technology. In the work \cite{Saenko2016a} these distributions were used to describe the results obtained using the Next Generation Sequence technology. To describe these experimental data, it is necessary to have algorithms for statistical estimation of parameters, the most effective of which is the maximum likelihood method. To construct such an algorithm, it is necessary to be able to correctly calculate the density of a strictly stable law for any values of $x$.

\section{Preliminary remarks}

The major purpose is to obtain the expansion of the density and distribution function of a strictly stable law in a power series in the vicinity of the point $x=0$. The paper deals with strictly stable laws with the characteristic function (\ref{eq:CF_formC}). Without loss of generality, we will assume that the scale parameter $\lambda=1$. Strictly stable laws with the parameter $\lambda=1$ are commonly called standard strictly stable laws.  Designation abbreviations are accepted for standard strictly stable laws.  The characteristic function will be designated by $\tilde{g}(t,\alpha,\theta,1)\equiv \hat{g}(t,\alpha,\theta)$, the probability density distribution will be designated by  $g(x,\alpha,\theta,1)\equiv g(x,\alpha,\theta)$, the distribution function will be designated by  $G(x,\alpha,\theta,1)\equiv G(x,\alpha,\theta)$.

To perform the inverse Fourier transform and obtain the probability density distribution, the following lemma is useful, which defines the inversion formula

\begin{lemma}
The probability density function $g(x,\alpha,\theta)$ for any admissible set of parameters $(\alpha,\theta)$ and any $x$ can be obtained using the inversion formulas
\begin{equation}\label{eq:InverseFormula}
  g(x,\alpha,\theta)=\frac{1}{2\pi}\int_{-\infty}^{\infty}e^{-itx}\hat{g}(t,\alpha,\theta)dt=
  \left\{\begin{array}{c}
           \displaystyle\frac{1}{\pi}\Re\int_{0}^{\infty} e^{itx}\hat{g}(t,\alpha,-\theta)dt,  \\
           \displaystyle\frac{1}{\pi}\Re\int_{0}^{\infty} e^{-itx}\hat{g}(t,\alpha,\theta)dt.
         \end{array}\right.
\end{equation}
\end{lemma}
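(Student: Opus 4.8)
Since $\hat g(t,\alpha,\theta)$ is by construction the characteristic function of the law with density $g(x,\alpha,\theta)$, the first equality in (\ref{eq:InverseFormula}) is nothing but the classical Fourier inversion formula; the real content is therefore (i) checking that inversion legitimately applies to (\ref{eq:CF_formC}), and (ii) folding the two-sided integral into the two half-line real-part representations.

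I would begin with integrability. From (\ref{eq:CF_formC}) with $\lambda=1$ one has $|\hat g(t,\alpha,\theta)|=\exp\{-|t|^{\alpha}\cos(\tfrac{\pi}{2}\alpha\theta)\}$, so $\hat g(\cdot,\alpha,\theta)\in L^{1}(\mathbf{R})$ as soon as $c_{\alpha,\theta}:=\cos(\tfrac{\pi}{2}\alpha\theta)>0$. This strict positivity follows from the admissibility bound $|\theta|\leqslant\min(1,2/\alpha-1)$ by a short case split: for $\alpha\in(0,1]$ it gives $\tfrac{\pi}{2}\alpha|\theta|\leqslant\tfrac{\pi}{2}\alpha\leqslant\tfrac{\pi}{2}$, while for $\alpha\in(1,2]$ it gives $\tfrac{\pi}{2}\alpha|\theta|\leqslant\tfrac{\pi}{2}\alpha(2/\alpha-1)=\tfrac{\pi}{2}(2-\alpha)<\tfrac{\pi}{2}$, so $c_{\alpha,\theta}>0$ for every admissible pair apart from the degenerate $\alpha=1$, $|\theta|=1$ (there $\hat g$ reduces to $e^{\pm it}$ and the law has no density). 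Hence $g(x,\alpha,\theta)$ exists, is bounded and continuous, and equals the first integral in (\ref{eq:InverseFormula}).

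Next I would record the conjugation symmetry of the characteristic function, read directly off (\ref{eq:CF_formC}): replacing $t$ by $-t$ flips $\sign t$, so
\[
  \hat g(-t,\alpha,\theta)=\exp\bigl\{-|t|^{\alpha}e^{\,i\frac{\pi}{2}\alpha\theta\sign t}\bigr\}=\overline{\hat g(t,\alpha,\theta)}=\hat g(t,\alpha,-\theta),
\]
which is also just the general identity $\hat g(-t)=\overline{\hat g(t)}$ for the characteristic function of a real random variable. Then I would split $\int_{-\infty}^{\infty}=\int_{0}^{\infty}+\int_{-\infty}^{0}$ in the first formula of (\ref{eq:InverseFormula}), substitute $t\mapsto-t$ in the second piece, and use $e^{itx}\hat g(-t,\alpha,\theta)=\overline{e^{-itx}\hat g(t,\alpha,\theta)}$ (legitimate since $x\in\mathbf{R}$): the two pieces are complex conjugates of each other, whence
\[
  g(x,\alpha,\theta)=\frac{1}{2\pi}\Bigl(\int_{0}^{\infty}e^{-itx}\hat g(t,\alpha,\theta)\,dt+\overline{\int_{0}^{\infty}e^{-itx}\hat g(t,\alpha,\theta)\,dt}\Bigr)=\frac{1}{\pi}\Re\int_{0}^{\infty}e^{-itx}\hat g(t,\alpha,\theta)\,dt,
\]
which is the second claimed representation. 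Finally, because $g(x,\alpha,\theta)$ is real-valued, $g=\overline{g}$; pushing the conjugate through $\Re$ and the integral and using $\overline{\hat g(t,\alpha,\theta)}=\hat g(t,\alpha,-\theta)$ from the symmetry above gives $g(x,\alpha,\theta)=\tfrac{1}{\pi}\Re\int_{0}^{\infty}e^{itx}\hat g(t,\alpha,-\theta)\,dt$, the first representation. The only step demanding genuine care is the integrability estimate, i.e.\ pinning down the sign of $c_{\alpha,\theta}$ from the admissibility constraint; everything else is routine bookkeeping with complex conjugates.
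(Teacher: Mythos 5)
Your argument is correct and complete. The paper itself does not prove this lemma in the text --- it simply refers the reader to the earlier work \cite{Saenko2020b} --- so there is no in-paper proof to compare against; your route (establishing $\cos(\tfrac{\pi}{2}\alpha\theta)>0$ from the admissibility constraint to get $\hat g\in L^1$, invoking classical Fourier inversion, then folding the two-sided integral via $\hat g(-t,\alpha,\theta)=\overline{\hat g(t,\alpha,\theta)}=\hat g(t,\alpha,-\theta)$) is exactly the standard argument one would expect that reference to contain. Your observation that the case $\alpha=1$, $|\theta|=1$ must be excluded as degenerate is consistent with the paper, which acknowledges precisely this degeneracy in the remark following Theorem~\ref{th:gx0_expan}, even though the lemma as stated glosses over it.
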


The proof of this lemma can be found in the paper \cite{Saenko2020b}. To obtain the probability density, there is no fundamental difference which of the formulas to use on the right side (\ref{eq:InverseFormula}). The result will differ only in the sign of the parameter $\theta$. Without loss of generality, in this paper we will use the first formula (\ref{eq:InverseFormula}). Such a choice results from the fact that in the works \cite{Zolotarev1986,Uchaikin1999,Saenko2020b} this formula was used to invert the characteristic function. This will give us an opportunity to compare the results obtained below with the results of the mentioned papers without any additional transformations.

In the article \cite{Saenko2020b} the inverse Fourier transform of the characteristic function  (\ref{eq:CF_formC}) was performed and expressions for the probability density and distribution function of a strictly stable law were obtained.  In the case $\alpha\neq1$ and $x\neq0$ for any admissible  $\theta$ the following integral representation is true for the probability density
\begin{equation}\label{eq:pdfI}
  g(x,\alpha,\theta)=\frac{\alpha}{\pi|\alpha-1|}\int_{-\pi\theta^*/2}^{\pi/2}\exp\left\{-|x|^{\alpha/(\alpha-1)}U(\varphi,\alpha,\theta^*)\right\} U(\varphi,\alpha,\theta^*)|x|^{1/(\alpha-1)}d\varphi,
\end{equation}
where $\theta^*=\theta\sign(x)$ and
\begin{equation}\label{eq:U}
  U(\varphi,\alpha,\theta)=\left(\frac{\sin\left(\alpha\left(\varphi+\frac{\pi}{2}\theta\right)\right)} {\cos\varphi}\right)^{\frac{\alpha}{1-\alpha}}\frac{\cos\left(\varphi(1-\alpha)-\frac{\pi}{2}\alpha\theta\right)}{\cos\varphi}.
\end{equation}
If $\alpha=1$, then for any admissible $-1\leqslant\theta\leqslant1$ the probability density has the form
\begin{equation}\label{eq:gx_a1}
  g(x,1,\theta)=\frac{\cos(\pi\theta/2)}{\pi\left(x^2-2x\sin(\pi\theta/2)+1\right)}.
\end{equation}
If $x=0$, then $g(0,\alpha,\theta)=\frac{1}{\pi}\cos(\pi\theta/2)\Gamma(1/\alpha+1)$.

The following expressions are valid for the distribution function. If $\alpha\neq1$, then for any admissible $\theta$
\begin{equation}\label{eq:cdfI}
  G(x,\alpha,\theta)=\tfrac{1}{2}(1-\sign(x))+\sign(x)G^{(+)}(|x|,\alpha,\theta^*),
\end{equation}
where
\begin{equation}\label{eq:cdfI_G+}
  G^{(+)}(x,\alpha,\theta)=1-\frac{1+\theta}{4}(1+\sign(1-\alpha))+
  \frac{\sign(1-\alpha)}{\pi}\int_{-\pi\theta/2}^{\pi/2}\exp\left\{-x^{\alpha/(\alpha-1)}U(\varphi,\alpha,\theta)\right\}d\varphi,
\end{equation}
$x>0$ and $U(\varphi,\alpha,\theta)$ is determined by the expression (\ref{eq:U}). If $\alpha=1$, then for any  $-1\leqslant\theta\leqslant1$
\begin{equation}\label{eq:Gx_a1}
G(x,1,\theta)=\frac{1}{2}+\frac{1}{\pi}\arctan\left(\frac{x-\sin\left(\frac{\pi}{2}\theta\right)} {\cos\left(\frac{\pi}{2}\theta\right)}\right).
\end{equation}
In the point $x=0$ for any admissible $\alpha$ and $\theta$
\begin{equation}\label{eq:G_x=0}
G(0,\alpha,\theta)=\frac{1}{2}(1-\theta).
\end{equation}

To obtain the density representation $g(x,\alpha,\theta)$ in the form of power series the integral obtained in the book  \cite{Bateman_V1_1953} (see \S 1.5. formula (31)) turns out to be useful.
\begin{equation*}
  \int_{0}^{\infty}t^{\gamma-1}e^{-ct\cos\beta-ict\sin\beta}dt=\Gamma(\gamma)c^{-\gamma}e^{-i\gamma\beta},\
  -\frac{\pi}{2}<\beta<\frac{\pi}{2},\ \Re\gamma>0\ \text{or}\ \beta=\pm\frac{\pi}{2},\ 0<\Re\gamma<1.
\end{equation*}
If we use Euler's formula $\cos\beta+i\sin\beta=e^{i\beta}$, then this integral can be represented in the form
\begin{equation}\label{eq:Gamma_intRepr1}
  \int_{0}^{\infty}t^{\gamma-1}e^{-ct\exp\{i\beta\}}dt=\Gamma(\gamma)c^{-\gamma}e^{-i\gamma\beta},\quad
  -\frac{\pi}{2}<\beta<\frac{\pi}{2},\ \Re\gamma>0\ \text{or}\ \beta=\pm\frac{\pi}{2},\ 0<\Re\gamma<1.
\end{equation}

\section{Representation of the probability density in the form of  a power series}

We obtain the expansion of the probability density $g(x,\alpha,\theta)$  in a series at $x\to0$.  The following theorem is valid

\begin{theorem}\label{th:gx0_expan}
In the case $x\to0$  for any admissible set of parameters $(\alpha,\theta)$ except for the values $\alpha=1, \theta=\pm1$ for the probability density $g(x,\alpha,\theta)$ the following representation in the form of a series is valid
\begin{equation}\label{eq:gx0}
g(x,\alpha,\theta)= g_N^0(x,\alpha,\theta)+R_N^0,
\end{equation}
where
\begin{align}
   g_N^0(x,\alpha,\theta)&= \frac{1}{\alpha\pi}\sum_{n=0}^{N-1}\frac{x^n}{n!}\Gamma\left(\frac{n+1}{\alpha}\right) \sin\left(\tfrac{\pi}{2}(n+1)(1-\theta)\right),\label{eq:gN0}\\
  |R_N^0|&\leqslant\frac{1}{\alpha\pi}\frac{|x|^N}{N!}\Gamma\left(\frac{N+1}{\alpha}\right)\label{eq:absRN}
\end{align}
\end{theorem}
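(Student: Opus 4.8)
The plan is to invert the characteristic function directly, Maclaurin-expand the factor $e^{itx}$, and integrate term by term with the help of (\ref{eq:Gamma_intRepr1}). By the first inversion formula of the Lemma and (\ref{eq:CF_formC}) with $\lambda=1$, for $t>0$ one has $\hat g(t,\alpha,-\theta)=\exp\{-t^{\alpha}e^{i\pi\alpha\theta/2}\}$, so
\[
g(x,\alpha,\theta)=\frac1\pi\,\Re\int_0^{\infty}e^{itx}\exp\!\bigl\{-t^{\alpha}e^{i\pi\alpha\theta/2}\bigr\}\,dt .
\]
Writing $e^{itx}=\sum_{n=0}^{N-1}\frac{(itx)^{n}}{n!}+r_N(t,x)$ breaks this into $N$ elementary integrals plus one remainder integral that will become $R_N^0$; since only finitely many terms are split off, no interchange of an infinite series with an integral is required here.

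For the $n$-th term I would substitute $u=t^{\alpha}$ to get $\int_0^{\infty}t^{n}\exp\{-t^{\alpha}e^{i\pi\alpha\theta/2}\}\,dt=\frac1\alpha\int_0^{\infty}u^{(n+1)/\alpha-1}\exp\{-u\,e^{i\pi\alpha\theta/2}\}\,du$ and then apply (\ref{eq:Gamma_intRepr1}) with $\gamma=(n+1)/\alpha$, $c=1$, $\beta=\tfrac{\pi}{2}\alpha\theta$, obtaining $\frac1\alpha\Gamma\!\bigl(\tfrac{n+1}{\alpha}\bigr)e^{-i\pi(n+1)\theta/2}$. The hypothesis $|\beta|<\tfrac\pi2$, i.e.\ $|\alpha\theta|<1$, holds for every admissible $(\alpha,\theta)$ except $\alpha=1,\theta=\pm1$; there $\beta=\pm\tfrac\pi2$ and the alternative hypothesis $0<\Re\gamma<1$ fails because $\gamma=n+1\ge1$ — exactly the pair excluded in the theorem (and for which $\hat g$ collapses to the characteristic function of a point mass, so no density exists). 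Multiplying by $\frac1\pi\frac{(ix)^{n}}{n!}$, taking real parts, and using $\Re\bigl[i^{n}e^{-i\pi(n+1)\theta/2}\bigr]=\cos\!\bigl(\tfrac{\pi n}{2}-\tfrac{\pi}{2}(n+1)\theta\bigr)=\sin\!\bigl(\tfrac\pi2(n+1)(1-\theta)\bigr)$ (from $\sin A=\cos(\tfrac\pi2-A)$ and parity of cosine) reproduces the summand in (\ref{eq:gN0}).

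The last ingredient, and the step I expect to be the main obstacle, is the bound (\ref{eq:absRN}) on $R_N^{0}=\frac1\pi\Re\int_0^{\infty}r_N(t,x)\exp\{-t^{\alpha}e^{i\pi\alpha\theta/2}\}\,dt$. The crude route uses $|r_N(t,x)|\le|tx|^{N}/N!$ (valid since $tx\in\mathbf{R}$) and $\bigl|\exp\{-t^{\alpha}e^{i\pi\alpha\theta/2}\}\bigr|=\exp\{-t^{\alpha}\cos\tfrac\pi2\alpha\theta\}$, and after the substitution $u=t^{\alpha}$ it gives $\frac1{\alpha\pi}\frac{|x|^{N}}{N!}\Gamma\!\bigl(\tfrac{N+1}{\alpha}\bigr)$ times the superfluous factor $\bigl(\cos\tfrac\pi2\alpha\theta\bigr)^{-(N+1)/\alpha}\ge1$. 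To remove that factor I would rotate the contour by $t\mapsto r\,e^{-i\pi\theta/2}$, which turns the kernel into the real $e^{-r^{\alpha}}$; the rotation is legitimate since the integrand decays like a polynomial times $e^{-cr^{\alpha}}$ on the arc for every $\alpha>0$, and the remainder factor — now with complex argument $irxe^{-i\pi\theta/2}$ — still obeys $|r_N|\le|tx|^{N}/N!$ provided $\Re(irxe^{-i\pi\theta/2})\le0$, i.e.\ $\theta x\le0$. The opposite sign $\theta x>0$ is the real difficulty, as is doing everything uniformly across the three regimes: $\alpha<1$, where the series is merely asymptotic and $R_N^{0}$ must be kept in integral form because $\sum_n a_n$ diverges; $\alpha=1$, $\theta\ne\pm1$, where the construction should reduce to the Maclaurin series of the asymmetric Cauchy density (\ref{eq:gx_a1}), whose poles lie on $|x|=1$ so the series converges for $|x|<1$; and $\alpha>1$, where the series converges for all $x$ and (\ref{eq:absRN}) can alternatively be read off from the tail $\sum_{n\ge N}a_n$. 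The change of variables, the use of (\ref{eq:Gamma_intRepr1}), and the trigonometric identity are all routine.
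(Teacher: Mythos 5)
Your derivation of the partial sum $g_N^0$ is essentially the paper's: inversion via the first formula in (\ref{eq:InverseFormula}), a finite Maclaurin expansion of $e^{itx}$, the substitution $t^\alpha=\tau$, formula (\ref{eq:Gamma_intRepr1}) with $\beta=\tfrac{\pi}{2}\alpha\theta$ (admissible precisely when $(\alpha,\theta)\neq(1,\pm1)$), and the identity $\Re\, i^{n}e^{-i\pi(n+1)\theta/2}=\sin\left(\tfrac{\pi}{2}(n+1)(1-\theta)\right)$. That half is correct and matches the paper step for step.

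The genuine gap is the one you name yourself: you do not prove (\ref{eq:absRN}). Your crude triangle-inequality route yields only $\frac{1}{\alpha\pi}\frac{|x|^N}{N!}\Gamma\left(\frac{N+1}{\alpha}\right)\left(\cos\tfrac{\pi}{2}\alpha\theta\right)^{-(N+1)/\alpha}$, which is Zolotarev's classical estimate (\ref{eq:RN_zlt}) rather than the sharper bound asserted in the theorem, and your contour rotation $t\mapsto re^{-i\pi\theta/2}$ removes the cosine factor only when $\theta x\leqslant0$, leaving $\theta x>0$ open; since (\ref{eq:absRN}) is claimed for all admissible $(\alpha,\theta)$ and all $x$, the proposal is incomplete. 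For comparison, the paper takes the remainder in Lagrange form, $R_N(itx)=\frac{(itx)^N}{N!}e^{itx\zeta}$ with $0<\zeta<1$, and passes from $\bigl|\Re\, i^N\int_0^\infty t^N e^{-t^\alpha e^{i\pi\alpha\theta/2}}e^{itx\zeta}\,dt\bigr|$ directly to $\bigl|\Re\, i^N\int_0^\infty t^N e^{-t^\alpha e^{i\pi\alpha\theta/2}}\,dt\bigr|$, citing only $|e^{itx\zeta}|\leqslant1$; the latter integral is then evaluated exactly by (\ref{eq:Gamma_intRepr1}) and bounded using $|\Re\, i^N e^{-i\pi(N+1)\theta/2}|\leqslant1$. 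Note that this passage in the paper is not itself a consequence of the triangle inequality (dropping a unimodular factor inside a real part need not decrease the modulus), so your instinct that this is the delicate point of the whole theorem is sound; but as written your argument establishes only the weaker Zolotarev-type remainder estimate, not (\ref{eq:absRN}).
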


\begin{proof}
We will perform the inverse Fourier transform of the characteristic function (\ref{eq:CF_formC}). To do this we make use of the first relation in (\ref{eq:InverseFormula}). We have
\begin{equation*}
g(x,\alpha,\theta)=\frac{1}{\pi}\Re\int_{0}^{\infty}e^{itx}\hat{g}(t,\alpha,-\theta)dt=
\frac{1}{\pi}\Re\int_{0}^{\infty}\exp\left\{itx-t^\alpha\exp\left\{i\tfrac{\pi}{2}\alpha\theta\right\}\right\}dt.
\end{equation*}

Since the considered case $x\to0$, then we expand $\exp\{itx\}$ in a series in the vicinity of the point  $x=0$. As a result, we get
\begin{equation}\label{eq:g_expan}
  g(x,\alpha,\theta)=g_N^0(x,\alpha,\theta)+R_N^0(x,\alpha,\theta),
\end{equation}
where the $N$-th partial sum $g_N^0(x,\alpha,\theta)$ and the remainder $R_N^0(x,\alpha,\theta)$ of a series have the form
\begin{align}
  g_N^0(x,\alpha,\theta)&=\frac{1}{\pi}\Re\int_{0}^{\infty}\exp\left\{-t^\alpha\exp\left\{i\tfrac{\pi}{2}\alpha\theta\right\}\right\}
  \sum_{n=0}^{N-1}\frac{(itx)^n}{n!}dt,\label{eq:gx0_tmp1}\\
 R_N^0(x,\alpha,\theta)&=\frac{1}{\pi}\Re\int_{0}^{\infty}\exp\left\{-t^\alpha\exp\left\{i\tfrac{\pi}{2}\alpha\theta\right\}\right\}
  R_N(itx)dt.\label{eq:RNx0_tmp1}
\end{align}
Here $R_N(itx)=\frac{(itx)^N}{N!}e^{itx\zeta}, (0<\zeta<1)$  is the remainder in the Lagrange form.

We consider the $N$-th partial sum $g_N^0(x,\alpha,\theta)$. To calculate the integral in (\ref{eq:gx0_tmp1}), we will change in some places the order of summation and integration and we will substitute the  integration variable $t^\alpha=\tau$. As a result, we obtain
\begin{equation}\label{eq:gx0_tmp2}
  g_N^0(x,\alpha,\theta) =
  \frac{1}{\alpha\pi}\sum_{n=0}^{N-1}\frac{x^n}{n!}\Re i^n\int_{0}^{\infty}\tau^{\frac{n+1}{\alpha}-1} \exp\left\{-\tau\exp\left\{i\tfrac{\pi}{2}\alpha\theta\right\}\right\}d\tau.
\end{equation}

Next, we examine the range of valid values of the argument $\tfrac{\pi}{2}\alpha\theta$. The range of admissible values of the parameter $\theta$ is determined by the inequality  $|\theta|\leqslant\min(1,2/\alpha-1)$. Hence, if $0<\alpha\leqslant1$, then $-1\leqslant\theta\leqslant1$, if $1<\alpha\leqslant2$, then $-(2/\alpha-1)\leqslant\theta\leqslant2/\alpha-1$. Thus,
\begin{equation}\label{eq:arg_a01}
  \tfrac{\pi}{2}\alpha\theta\in\left[\tfrac{\pi}{2}\alpha,\tfrac{\pi}{2}\alpha\right]\in\left[-\tfrac{\pi}{2},\tfrac{\pi}{2}\right],\quad
  \mbox{if}\quad 0<\alpha\leqslant1.
\end{equation}
\begin{equation}\label{eq:arg_a12}
  \tfrac{\pi}{2}\alpha\theta\in\left[\tfrac{\pi}{2}\alpha-\pi,\pi-\tfrac{\pi}{2}\alpha\right] \in \left(-\tfrac{\pi}{2},\tfrac{\pi}{2}\right),\quad
  \mbox{if}\quad 1<\alpha\leqslant2.
\end{equation}
Combining (\ref{eq:arg_a01}) and (\ref{eq:arg_a12}), we obtain
\begin{equation}\label{eq:arg_a02}
  -\tfrac{\pi}{2}\leqslant\tfrac{\pi}{2}\alpha\theta\leqslant\tfrac{\pi}{2},\quad \mbox{if}\quad 0<\alpha\leqslant2.
\end{equation}
As we can see, extreme values of this range are reached in the case $\alpha=1$ and $\theta=\pm1$.

Taking into consideration (\ref{eq:arg_a02}), it is clear that  to calculate the integral in  (\ref{eq:gx0_tmp2}), one can use the formula (\ref{eq:Gamma_intRepr1}). We get
\begin{equation}\label{eq:gx0_intGammaFun}
 \int_{0}^{\infty}\tau^{\frac{n+1}{\alpha}-1} \exp\left\{-\tau\exp\left\{i\tfrac{\pi}{2}\alpha\theta\right\}\right\}d\tau=
 \Gamma\left(\frac{n+1}{\alpha}\right)\exp\left\{-i\tfrac{\pi}{2}(n+1)\theta\right\}.
\end{equation}
From the relation (\ref{eq:Gamma_intRepr1}) it follows that for the arbitrary value $(n+1)/\alpha>0$ it is necessary to exclude the case $\tfrac{\pi}{2}\alpha\theta=\pm\tfrac{\pi}{2}$ from consideration, which is implemented at values  $\alpha=1,\theta=\pm1$. Now using the expression  (\ref{eq:gx0_intGammaFun}) in (\ref{eq:gx0_tmp2}), we obtain
\begin{equation*}
g_N^0(x,\alpha,\theta)=\frac{1}{\alpha\pi}\sum_{n=0}^{N-1}\frac{x^n}{n!}\Gamma\left(\frac{n+1}{\alpha}\right)\Re i^n \exp\left\{-i\tfrac{\pi}{2}(n+1)\theta\right\}.
\end{equation*}
Considering now that $\Re i^{n} \exp\left\{-i\tfrac{\pi}{2}(n+1)\theta\right\}=\sin\left(\tfrac{\pi}{2}(n+1)(1-\theta)\right)$, we finally obtain
\begin{equation*}
  g_N^0(x,\alpha,\theta)=\frac{1}{\alpha\pi}\sum_{n=0}^{N-1}\frac{x^n}{n!}\Gamma\left(\frac{n+1}{\alpha}\right) \sin\left(\tfrac{\pi}{2}(n+1)(1-\theta)\right).
\end{equation*}

Now we consider the remainder $R_N^0(x,\alpha,\theta)$.  From the expression (\ref{eq:RNx0_tmp1}) we get
\begin{equation*}
  R_N^0=\frac{x^N}{\pi N!}\Re i^N\int_{0}^{\infty}\exp\left\{itx\zeta-t^\alpha\exp\left\{i\tfrac{\pi}{2}\alpha\theta\right\}\right\}
  t^Ndt.
\end{equation*}
It is not possible to calculate this integral since the exact value of the quantity $\zeta$ is not known. It is only known that $0<\zeta<1$. However, one can obtain an estimate for this integral. We have
\begin{multline*}
  R_N^0(x,\alpha)\leqslant|R_N^0(x,\alpha)| \leqslant \frac{1}{\pi}\frac{|x|^N}{N!}\left|\Re i^N\int_{0}^{\infty}t^N \exp\left\{-t^\alpha\exp\left\{i\tfrac{\pi}{2}\alpha\theta\right\}\right\}e^{itx\zeta}dt\right|\leqslant \\
  \leqslant \frac{|x|^N}{\pi N!}\left(\left|\Re i^N\int_{0}^{\infty}t^N \exp\left\{-t^\alpha\exp\left\{i\tfrac{\pi}{2}\alpha\theta\right\}\right\}dt\right|\right)=\\
   = \frac{|x|^N}{\alpha\pi N!}\left(\left|\Re i^N\int_{0}^{\infty}\tau^{\frac{N+1}{\alpha}-1} \exp\left\{-\tau\exp\left\{i\tfrac{\pi}{2}\alpha\theta\right\}\right\}d\tau\right|\right)=\\
   \frac{|x|^N}{\alpha\pi N!} \Gamma\left(\frac{N+1}{\alpha}\right) \left|\Re i^N\exp\left\{-i\tfrac{\pi}{2}(N+1)\theta\right\}\right|\leqslant
   \frac{1}{\alpha\pi}\frac{|x|^N}{N!} \Gamma\left(\frac{N+1}{\alpha}\right)
\end{multline*}
To obtain the third inequality, it was taken into consideration that $|\exp\{itx\zeta\}|\leqslant1$. Next, the integration variable was substituted $t^\alpha=\tau$. To calculate the resulting integral, the formula (\ref{eq:Gamma_intRepr1}) was used.
The obtained expression completely proves the theorem.
\begin{flushright}
  $\Box$
\end{flushright}
\end{proof}

We need to make one small remark. When proving the theorem, it was pointed out that it was necessary to exclude the case $\tfrac{\pi}{2}\alpha\theta=\pm\tfrac{\pi}{2}$ from consideration, which corresponds to the values of parameters $\alpha=1, \theta\pm1$. As part of the proof of the theorem, this was done so that the range of admissible values of the argument $\tfrac{\pi}{2}\alpha\theta$  of the integral in (\ref{eq:gx0_tmp2}), should coincide with the range of admissible values of the argument $\beta$, included in the integral(\ref{eq:Gamma_intRepr1}). However, the exception of the case $\beta=\pm\tfrac{\pi}{2}$ from the integral (\ref{eq:Gamma_intRepr1}) is related with the fact that in these points the integral (\ref{eq:Gamma_intRepr1}) will diverge (for details see\cite{Saenko2022a}). Therefore, it should be assumed that $\alpha=1$ and $\theta=\pm1$, then the integral in (\ref{eq:gx0_tmp2}) will diverge. This in its turn leads to a degenerate probability density at that point. As a result, we arrive at the well-known fact that the probability density with the characteristic function  (\ref{eq:CF_formC}) is degenerate in the points $\alpha=1,\theta\pm1$.

As noted in the Introduction, depending on the value of the parameter $\alpha$, the expansion of the probability density of the stable law in a power series turns out to be either convergent or divergent.   Absolutely the same situation occurs in the considered case.   The answer to the question under what values of $\alpha$ the expansion (\ref{eq:gx0}) is convergent, and for which it is divergent we formulate as a corollary

\begin{corollary}\label{corol:gN0_a><1}
In the case $\alpha<1$ the series (\ref{eq:gN0}) is divergent at $N\to\infty$. In this case for the density $g(x,\alpha,\theta)$ for any admissible $\theta$ the asymptotic expansion
\begin{equation*}
  g(x,\alpha,\theta)\sim \frac{1}{\alpha\pi}\sum_{n=0}^{N-1}\frac{x^n}{n!}\Gamma\left(\frac{n+1}{\alpha}\right) \sin\left(\tfrac{\pi}{2}(n+1)(1-\theta)\right),\quad x\to0.
\end{equation*} is valid.

In the case $\alpha=1$ the series (\ref{eq:gN0}) converge for any $x$, satisfying the condition $|x|<1$. In this case for the density $g(x,1,\theta)$ for any admissible $\theta\neq\pm1$ it is possible to represent in the form of an infinite series
\begin{equation}\label{eq:gx0_a1}
  g(x,1,\theta)=\frac{1}{\pi}\sum_{n=0}^{\infty}\sin(\tfrac{\pi}{2}(n+1)(1-\theta)) x^n, \quad |x|<1.
\end{equation}

In the case $\alpha>1$ the series (\ref{eq:gN0}) converge for any $x$. In this case for the density $g(x,\alpha,\theta)$ for any admissible $\theta$ it possible to represent in the form of an infinite series
\begin{equation*}
g(x,\alpha,\theta)= \frac{1}{\alpha\pi}\sum_{n=0}^{\infty}\frac{x^n}{n!}\Gamma\left(\frac{n+1}{\alpha}\right) \sin\left(\tfrac{\pi}{2}(n+1)(1-\theta)\right).
\end{equation*}
\end{corollary}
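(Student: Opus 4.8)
All three assertions hinge on the growth of the quantity
\[
  c_n(x) \;:=\; \frac{|x|^{n}}{n!}\,\Gamma\!\left(\frac{n+1}{\alpha}\right),
\]
which, by the remainder bound (\ref{eq:absRN}) of Theorem~\ref{th:gx0_expan}, dominates the tail, $|R_N^0|\leqslant\frac{1}{\alpha\pi}c_N(x)$, and which (up to the bounded factor $\tfrac1\pi|\sin(\cdot)|$) also controls the general term of the series (\ref{eq:gN0}). The plan is: (i) pin down the asymptotics of $c_n(x)$ as $n\to\infty$ by Stirling's formula, conveniently via $\Gamma(z+a)/\Gamma(z)\sim z^{a}$ applied to the ratio $c_{n+1}/c_n=|x|\,(n+1)^{-1}\,\Gamma(\tfrac{n+2}{\alpha})/\Gamma(\tfrac{n+1}{\alpha})\sim |x|\,\alpha^{-1/\alpha}\,n^{\frac1\alpha-1}$; (ii) when $c_N(x)\to0$, let $N\to\infty$ in (\ref{eq:gx0}) to identify the sum with $g$; (iii) when $c_N(x)\to\infty$, show the general term of (\ref{eq:gN0}) fails to vanish, and recover the Poincaré asymptotic property directly from (\ref{eq:absRN}).

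For $\alpha>1$ one has $\tfrac1\alpha-1<0$, so $c_{n+1}/c_n\to0$ for every fixed $x$; the ratio test gives absolute convergence of (\ref{eq:gN0}) for all $x$, and $c_N(x)\to0$ forces $R_N^0\to0$ in (\ref{eq:gx0}), whence $g(x,\alpha,\theta)=\frac{1}{\alpha\pi}\sum_{n\geqslant0}\frac{x^{n}}{n!}\Gamma(\tfrac{n+1}{\alpha})\sin(\tfrac{\pi}{2}(n+1)(1-\theta))$. For $\alpha=1$ the factor $\Gamma(n+1)/n!$ equals $1$, the $n$-th term of (\ref{eq:gN0}) collapses to $\tfrac1\pi\sin(\tfrac{\pi}{2}(n+1)(1-\theta))\,x^{n}$, and $c_N(x)=|x|^{N}$; thus $R_N^0\to0$ exactly when $|x|<1$, and there the partial sums converge to $g(x,1,\theta)$, giving (\ref{eq:gx0_a1}). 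As a consistency check one may note that the poles of the closed form (\ref{eq:gx_a1}) satisfy $x^{2}-2x\sin(\tfrac{\pi}{2}\theta)+1=0$, i.e.\ $|x|=1$, so the radius of convergence is indeed $1$ and the series must diverge for $|x|\geqslant1$.

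For $\alpha<1$ one has $\tfrac1\alpha-1>0$, and Stirling gives $\ln c_n(x)=n\ln|x|+(\tfrac1\alpha-1)\,n\ln n+O(n)\to+\infty$ for every $x\neq0$, so $c_n(x)$ grows faster than any geometric sequence. Since $\big|\sin(\tfrac{\pi}{2}(n+1)(1-\theta))\big|$ stays bounded away from $0$ along a set of $n$ of positive density whenever $\theta\neq\pm1$ (equidistribution of $\{(n+1)\tfrac{1-\theta}{2}\}$ if $1-\theta$ is irrational, periodicity if rational), the general term of (\ref{eq:gN0}) does not tend to $0$ and the series diverges for every $x\neq0$. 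That the partial sums $g_N^0$ nevertheless form an asymptotic expansion of $g$ at the origin is immediate from (\ref{eq:absRN}): for each \emph{fixed} $N$ the bound reads $R_N^0=O(|x|^{N})=o(|x|^{N-1})$ as $x\to0$, which is precisely the Poincaré definition of the asymptotic series in (\ref{eq:gN0}).

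The only genuine computation is the Stirling asymptotics of $\Gamma((n+1)/\alpha)/n!$ (equivalently $\Gamma(z+a)/\Gamma(z)\sim z^{a}$); everything else is the ratio test and passing to the limit in (\ref{eq:gx0}). One subtlety deserves a remark: for $\alpha<1$ and $\theta=\pm1$ the sine factor vanishes for every $n$ (then $1-\theta\in\{0,2\}$), so the series is identically zero and trivially converges — this merely reflects that the one-sided stable density is flat to all orders at $x=0$, and the divergence statement is to be read for $\theta\neq\pm1$.
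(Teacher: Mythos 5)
Your proposal is correct and follows essentially the same route as the paper: Stirling asymptotics for $\Gamma\left(\frac{n+1}{\alpha}\right)/n!$ to sort out convergence versus divergence of (\ref{eq:gN0}), the remainder bound (\ref{eq:absRN}) with $N\to\infty$ to identify the sum with $g$ when the series converges, and the same bound with $N$ fixed and $x\to0$ to get the Poincar\'e asymptotic property when $\alpha<1$. The only differences are cosmetic or mildly advantageous: you use the ratio test where the paper uses the root test (equivalent here), and your divergence argument for $\alpha<1$ is actually tighter than the paper's --- the paper majorizes $|g_N^0|$ by the series with $|\sin(\cdot)|$ replaced by $1$ and shows \emph{that} series diverges, which by itself does not force divergence of the original signed series, whereas you close this gap by showing the general term itself fails to tend to zero (periodicity/equidistribution of the sine factor). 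Your closing remark that for $\alpha<1$, $\theta=\pm1$ the series is identically zero, so the blanket divergence claim needs that caveat, is a legitimate observation the paper does not make.
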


\begin{proof}
We examine the convergence of the series (\ref{eq:gN0}). As we can see, this series is sign-alternating. Consequently
\begin{equation*}
  g_N^0(x,\alpha,\theta)\leqslant|g_N^0(x,\alpha,\theta)|\leqslant \frac{1}{\alpha\pi}\sum_{n=0}^{N-1}\frac{|x|^n}{n!}\Gamma\left(\frac{n+1}{\alpha}\right) \left|\sin\left(\tfrac{\pi}{2}(n+1)(1-\theta)\right)\right|\leqslant
  \frac{1}{\alpha\pi}\sum_{n=0}^{N-1}\frac{|x|^n}{n!}\Gamma\left(\frac{n+1}{\alpha}\right)
\end{equation*}
We apply the Cauchy criterion in the limiting form to the resulting series.
\begin{multline*}
  \lim_{n\to\infty}\left(\frac{|x|^n}{\alpha\pi}\frac{\Gamma\left((n+1)/\alpha\right)}{n!}\right)^{1/n}=
\lim_{n\to\infty}\frac{|x|}{(\alpha\pi)^{1/n}}\left(\frac{\Gamma\left((n+1)/\alpha\right)}{\Gamma(n+1)}\right)^{1/n}= \\
  =|x|\lim_{n\to\infty}\left(\frac{\exp\left\{-\frac{n+1}{\alpha}\right\} \left(\frac{n+1}{\alpha}\right)^{\frac{n+1}{\alpha}-\frac{1}{2}}\sqrt{2\pi}} {\exp\left\{-n-1\right\}(n+1)^{n+1-\frac{1}{2}}\sqrt{2\pi}}\right)^{\frac{1}{n}} =\\
  =|x|\lim_{n\to\infty}e^{\left(1+\frac{1}{n}\right)\left(1-\frac{1}{\alpha}\right)} \alpha^{-\frac{1}{\alpha}-\frac{1}{n}\left(\frac{1}{\alpha}-\frac{1}{2}\right)} (n+1)^{\left(1+\frac{1}{n}\right)\left(\frac{1}{\alpha}-1\right)}
   = |x|e^{1-\frac{1}{\alpha}}\alpha^{-\frac{1}{\alpha}}\lim_{n\to\infty}(n+1)^{\left(\frac{1}{\alpha}-1\right)}=
   \left\{\begin{array}{cc}
            \infty, & \alpha<1, \\
            |x|,&\alpha=1,\\
            0, & \alpha>1.
          \end{array}\right.
\end{multline*}
Here the Stirling’s formula was used
\begin{equation}\label{eq:Stirling}
\Gamma(z)\sim e^{-z}z^{z-\frac{1}{2}}\sqrt{2\pi},\quad z\to\infty,\quad |\arg z|<\pi.
\end{equation}
From the result obtained we can see that for the values $\alpha<1$  the series (\ref{eq:gN0}) diverges for any $x$,  with the value $\alpha=1$  the series (\ref{eq:gN0}) converges for any values of $x$, satisfying the condition $|x|<1$, and in the case $\alpha>1$ the series (\ref{eq:gN0}) converges for any $x$.

We consider the case $\alpha<1$. In this case the series (\ref{eq:gN0}) diverges at $N\to\infty$. However, from the expression (\ref{eq:absRN}) it follows that  for some fixed $N$
\begin{equation*}
  R_N^0=O\left(x^N\right), \quad x\to0.
\end{equation*}
Thus, with every $N$ we have
\begin{equation*}
  g(x,\alpha,\theta)=\frac{1}{\alpha\pi}\sum_{n=0}^{N-1}\frac{x^n}{n!}\Gamma\left(\frac{n+1}{\alpha}\right) \sin\left(\tfrac{\pi}{2}(n+1)(1-\theta)\right)+O\left(x^N\right),\quad x\to0.
\end{equation*}
As a result, we have obtained the definition of an asymptotic series. Consequently,
\begin{equation*}
  g(x,\alpha,\theta)\sim\frac{1}{\alpha\pi}\sum_{n=0}^{N-1}\frac{x^n}{n!}\Gamma\left(\frac{n+1}{\alpha}\right) \sin\left(\tfrac{\pi}{2}(n+1)(1-\theta)\right),\quad x\to0,\quad \alpha<1.
\end{equation*}

We consider the case $\alpha>1$. From the expression (\ref{eq:gx0}) it follows that
\begin{equation}\label{eq:gx0-gN0}
  |g(x,\alpha,\theta)-g_N^0(x,\alpha,\theta)|\leqslant\frac{1}{\alpha\pi}\frac{\Gamma\left(\frac{N+1}{\alpha}\right)}{N!}|x|^N.
\end{equation}
We will set some arbitrary $x$ and consider the limit of the right-hand side of this inequality under the condition $N\to\infty$ . We have
\begin{multline*}
  \frac{1}{\alpha\pi}\lim_{N\to\infty}\frac{\Gamma\left(\frac{N+1}{\alpha}\right)}{N!}|x|^N= \frac{1}{\alpha\pi}\lim_{N\to\infty}\frac{\Gamma\left(\frac{N+1}{\alpha}\right)}{\Gamma(N+1)}|x|^N=
  \frac{1}{\alpha\pi} \lim_{N\to\infty}|x|^N\frac{\exp\left\{-\frac{N+1}{\alpha}\right\} \left(\frac{N+1}{\alpha}\right)^{\frac{N+1}{\alpha}-\frac{1}{2}}\sqrt{2\pi}}{\exp\left\{-N-1\right\} \left(N+1\right)^{N+1-\frac{1}{2}}\sqrt{2\pi}}= \\
  =\frac{\alpha^{-\left(\frac{1}{2}+\frac{1}{\alpha}\right)}}{\pi}\lim_{N\to\infty}|x|^N\alpha^{-N/\alpha} e^{-(N+1)\left(\frac{1}{\alpha}-1\right)} (N+1)^{(N+1)\left(\frac{1}{\alpha}-1\right)}=\\
  =\frac{\alpha^{-\left(\frac{1}{2}+\frac{1}{\alpha}\right)}}{\pi}\lim_{N\to\infty}|x|^N\alpha^{-N/\alpha} e^{(N+1)\left(\frac{1}{\alpha}-1\right)(\ln(N+1)-1)}=0.
\end{multline*}
Thus, the right-hand side (\ref{eq:gx0-gN0}) represents an element of an infinitesimal sequence. This means that the sequence $g_N^0(x,\alpha,\theta)$ at $N\to\infty$ converges to the density $g(x,\alpha,\theta)$.  Therefore, in the case $\alpha>1$ for any fixed $x$ for the density $g(x,\alpha,\theta)$ the representation in the form of an infinite series is valid
\begin{equation*}
  g(x,\alpha,\theta)=\frac{1}{\alpha\pi}\sum_{n=0}^{\infty}\frac{x^n}{n!}\Gamma\left(\frac{n+1}{\alpha}\right) \sin\left(\tfrac{\pi}{2}(n+1)(1-\theta)\right).
\end{equation*}

Now we consider the case $\alpha=1$. From the expression (\ref{eq:gx0}) it directly follows
\begin{equation*}
  |g(x,1,\theta)-g_N^0(x,1,\theta)|\leqslant\frac{|x|^N}{\pi}.
\end{equation*}
We fix some arbitrary $x$ and find the redistribution under the condition $N\to\infty$. As a result, we obtain
\begin{equation*}
  \lim_{N\to\infty}\frac{|x|^N}{\pi}=\left\{\begin{array}{cc}
                                       0, & |x|<1, \\
                                       \infty, & |x|\geqslant1.
                                     \end{array}\right.
\end{equation*}
Thus, the right side of the previous expression at $|x|<1$ is an element of an infinitesimal sequence. Therefore, the sequence $g_N^0(x,1,\theta)$ converges to the density $g(x,1,\theta)$ at $N\to\infty$ and $|x|<1$. Now substituting the value $\alpha=1$, in the series  (\ref{eq:gN0}) we obtain (\ref{eq:gx0_a1}).
\
\begin{flushright}
  $\Box$
\end{flushright}
\end{proof}

The proved corollary shows that in the case $\alpha=1$ in the interval $-1<x<1$ the series (\ref{eq:gx0_a1}) converges to the density $g(x,1,\theta)$. It is important to show that in this case the series (\ref{eq:gx0_a1}) converges to the density (\ref{eq:gx_a1}). We formulate this result in the form

\begin{remark}\label{rm:pdf_a1}
In the case $\alpha=1$ for any $-1<\theta<1$ in the region $-1<x<1$  the series (\ref{eq:gx0_a1}) converges to the density (\ref{eq:gx_a1}).
\end{remark}

\begin{proof}
To prove this remark we will consider the density (\ref{eq:gx_a1}) and show that the expansion of this density in a Taylor series in the vicinity of the point $x=0$ has the form (\ref{eq:gx0_a1}). For the convenience of further presentation, we use the reduction formulas  $\cos\left(\tfrac{\pi}{2}\theta\right)=\sin\left(\tfrac{\pi}{2}(1-\theta)\right)$, $\sin\left(\tfrac{\pi}{2}\theta\right)=\cos\left(\tfrac{\pi}{2}(1-\theta)\right)$ and represent the density (\ref{eq:gx_a1}) in the form
\begin{equation*}
  g(x,1,\theta)=\frac{\sin\left(\tfrac{\pi}{2}(1-\theta)\right)}{\pi\left(x^2-2x\cos\left(\tfrac{\pi}{2}(1-\theta)\right)+1\right)}.
\end{equation*}

Now we expand the density $g(x,1,\theta)$ in a Taylor series in the vicinity of the point $x=0$. Since this density is an infinitely differentiable function, we have
\begin{equation}\label{eq:g(x)_seres}
  g(x,1,\theta)=g(0,1,\theta)+\sum_{n=1}^{\infty}\frac{1}{n!}\left.\frac{d^n g(x,1,\theta)}{dx^n}\right|_{x=0}x^n
\end{equation}
We will draw attention that the function $g(x,1,\theta)$ is a complex function. We will introduce the designations
\begin{equation}\label{eq:f_g_def}
f\equiv f(u)=1/u,\quad g\equiv g(x)=x^2-2x \cos(\tfrac{\pi}{2}(1-\theta))+1.
\end{equation}
In view of the introduced designations, the density (\ref{eq:gx_a1}) takes the form
\begin{equation*}
  g(x,1,\theta)=\frac{\sin(\tfrac{\pi}{2}(1-\theta))}{\pi}f(g(x)).
\end{equation*}
To calculate the $n$-th derivative we use the Bruno formula
\begin{equation}\label{eq:dg(x)dx}
  \frac{d^n g(x,1,\theta)}{dx^n}=\frac{\sin(\tfrac{\pi}{2}(1-\theta))}{\pi}\frac{d^n f(g(x))}{dx^n}=\frac{\sin(\tfrac{\pi}{2}(1-\theta))}{\pi}Y_n(fg_1,fg_2,\dots,fg_n),
\end{equation}
where $Y_n(fg_1,fg_2,\dots,fg_n)$ are the Bell polynomials (see~\cite{Riordan1958_en})
\begin{equation}\label{eq:BellPolinom}
  Y_n(fg_1,fg_2,\dots,fg_n)=\sum \frac{n!f_m}{k_1!k_2!\dots k_n!}\left(\frac{g_1}{1!}\right)^{k_1} \left(\frac{g_2}{2!}\right)^{k_2}\dots \left(\frac{g_n}{n!}\right)^{k_n}.
\end{equation}
Here $f^m\equiv f_m$, $m=k_1+k_2+\dots+k_n$, the sum is taken over all solutions to the equation
\begin{equation}\label{eq:BelPolSumEq}
k_1+2k_2+\dots+n k_n=n,\quad k_j\geqslant0,\quad j=1,2,\dots,n.
\end{equation}
and
\begin{equation*}
  f_m=\left.\frac{d^mf(u)}{du^m}\right|_{u=g(x)},\quad g_j=\frac{d^j g(x)}{dx^j }.
\end{equation*}
Taking into account (\ref{eq:f_g_def}), we get
\begin{equation}\label{eq:fk_g}
  f_m=(-1)^m\left.\frac{m!}{u^{m+1}}\right|_{u=g(x)}=\frac{(-1)^m m!}{\left(x^2-2x \cos\left(\tfrac{\pi}{2}(1-\theta)\right)+1\right)^{m+1}}.
\end{equation}
For coefficients $g_j$ we have
\begin{equation}\label{eq:gk_g}
  g_1=2x-2\cos\left(\tfrac{\pi}{2}(1-\theta)\right),\quad g_2=2,\quad g_3
  =g_4=\dots=g_n=0.
\end{equation}
This shows that in the expression (\ref{eq:BellPolinom}) the sum contains the summands that satisfy the equation
\begin{equation}\label{eq:BelPolSumEq_g}
k_1+2k_2=n.
\end{equation}

Indeed, in the expression (\ref{eq:BellPolinom}) the summation is done over all solutions to the equation (\ref{eq:BelPolSumEq}). In case, if the solution $k_j\neq0,\ j=3,4,\dots,n$, then the corresponding term in the sum will be equal to zero, since $g_j=0$, $j=3,4,\dots,n$. If $k_j=0$, $j=3,4,\dots,n$, then the multiplier $(g_j/j!)^{k_j}=1$, since  $0^0=1$.  Consequently, in the expression (\ref{eq:BellPolinom}) there are the summands that satisfy the solution to the equation (\ref{eq:BelPolSumEq_g}). This significantly simplifies the summation. It follows from the equation (\ref{eq:BelPolSumEq_g}) that $k_1=n-2k_2$. Taking into consideration that $k_1\geqslant0$ and $k_2\geqslant0$, we obtain $k_2=0,1,2,\dots,\left[\tfrac{n}{2}\right]$, where $[A]$ means the integer part of the number $A$. It gives an opportunity to introduce directly the summation index in the sum (\ref{eq:BellPolinom}). In view of the foregoing, the formula (\ref{eq:BellPolinom}) takes the form
\begin{equation*}
  Y_n(fg_1,fg_2)=\sum_{k=0}^{\left[\frac{n}{2}\right]}\frac{n!f_{n-k}}{(n-2k)!k!}\left(\frac{g_1}{1!}\right)^{n-2k} \left(\frac{g_2}{2!}\right)^k,
\end{equation*}
where the relation $k_1=n-2k_2$ is used and  the summation index $k\equiv k_2$ is introduced. Now substituting this relation in (\ref{eq:dg(x)dx}) and using (\ref{eq:fk_g}) and (\ref{eq:gk_g}), we obtain
\begin{equation}\label{eq:g_n_der}
  \frac{d^n g(x,1,\theta)}{dx^n}=\frac{\sin\left(\frac{\pi}{2}(1-\theta)\right)}{\pi} \sum_{k=0}^{\left[\tfrac{n}{2}\right]} \frac{(-1)^{n-k} n! (n-k)!}{k!(n-2k)!}\frac{\left(2x-2\cos\left(\tfrac{\pi}{2}(1-\theta)\right)\right)^{n-2k}} {\left(x^2-2x\cos\left(\tfrac{\pi}{2}(1-\theta)\right)+1\right)^{n-k+1}}.
\end{equation}

Now we calculate the value of this derivative in the point $x=0$. It is easy to see that
\begin{equation}\label{eq:dg(0)dx}
  \left.\frac{d^n g(x,1,\theta)}{dx^n}\right|_{x=0}=\frac{\sin\left(\tfrac{\pi}{2}(1-\theta)\right)}{\pi} \sum_{k=0}^{\left[\frac{n}{2}\right]} \frac{(-1)^k n!(n-k!)}{k!(n-2k)!}\left(2\cos\left(\tfrac{\pi}{2}(1-\theta)\right)\right)^{n-2k},
\end{equation}
where it is taken into account that $(-1)^{2n-3k}=(-1)^k$.

Next, we use the general formula for $\sin(n\varphi)$ (see, for example, \cite{Schaum2018})
\begin{equation}\label{eq:sin_na}
  \sin(n\varphi)=\sin\varphi\sum_{k=1}^{\left[\frac{n-1}{2}\right]}(-1)^k\frac{(n-k-1)!}{k!(n-2k-1)!}(2\cos\varphi)^{n-2k-1}.
\end{equation}
Using this formula in (\ref{eq:dg(0)dx}), we obtain
\begin{equation*}
  \left.\frac{d^n g(x,1,\theta)}{dx^n}\right|_{x=0}=\frac{n!}{\pi}\sin\left(\tfrac{\pi}{2}(n+1)(1-\theta)\right).
\end{equation*}
Using this expression now in (\ref{eq:g(x)_seres}), we get
\begin{equation*}
  g(x,1,\theta)=g(0,1,\theta)+\frac{1}{\pi}\sum_{n=1}^{\infty}x^n\sin(\tfrac{\pi}{2}(n+1)(1-\theta))=
  \frac{1}{\pi}\sum_{n=0}^{\infty}\sin(\tfrac{\pi}{2}(n+1)(1-\theta)) x^n.
\end{equation*}
Thus, the expansion of the density (\ref{eq:gx_a1}) into an infinite Taylor series in the vicinity of the point $x=0$ agrees exactly with the series (\ref{eq:gx0_a1}). It completely proves the corollary.
\begin{flushright}
  $\Box$
\end{flushright}
\end{proof}

Theorem~\ref{th:gx0_expan} gives an opportunity to determine the range of values $x$ within which the absolute error of the density calculation using the series (\ref{eq:gN0}) and for some fixed $N$ will not exceed the predetermined value $\varepsilon$. This turns out to be very convenient when calculating the probability density. Indeed, from the relations (\ref{eq:gx0}) and (\ref{eq:absRN}) we obtain
\begin{equation*}
  |g(x,\alpha,\theta)-g_N^0(x,\alpha,\theta)|\leqslant\frac{1}{\alpha\pi}\frac{|x|^N}{N!}\Gamma\left(\frac{N+1}{\alpha}\right)
\end{equation*}
If, for a given value of $N$ we set the absolute value of the error $|g(x,\alpha,\theta)-g_N^0(x,\alpha,\theta)|=\varepsilon$,  then it becomes possible to introduce the threshold coordinate
\begin{equation}\label{eq:x_eps}
  x_\varepsilon^N=\left(\frac{\alpha\pi\varepsilon N!}{\Gamma\left(\frac{N+1}{\alpha}\right)}\right)^{\frac{1}{N}}
\end{equation}
This value shows that for coordinates $|x|\leqslant|x_\varepsilon^N|$ the absolute value of the density calculation error using the series (\ref{eq:gN0}) will not exceed $\varepsilon$, i.e.
\begin{equation*}
  |g(x,\alpha,\theta)-g_N^0(x,\alpha,\theta)|\leqslant\varepsilon,\quad -x_\varepsilon^N\leqslant x\leqslant x_\varepsilon^N.
\end{equation*}
Thus, to calculate the probability density, we can use  the $N$-th partial sum (\ref{eq:gN0}) in the range of coordinates $x\in[-x_\varepsilon^N, x_\varepsilon^N]$. In this case, the magnitude of the absolute error at  fixed $N$ will not exceed the chosen value $\varepsilon$.

\begin{figure}
  \centering
  \includegraphics[width=0.45\textwidth]{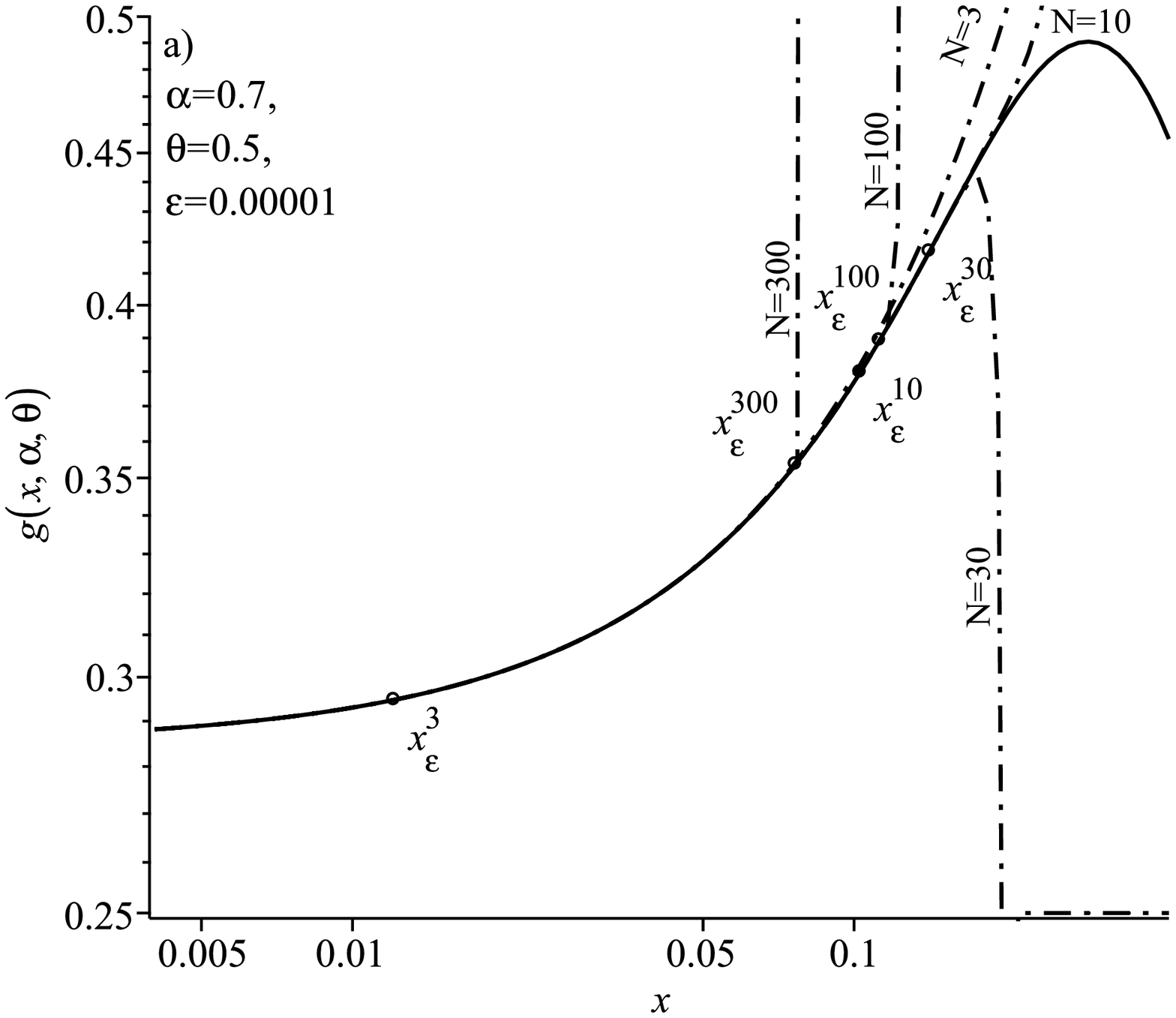}\hfill
  \includegraphics[width=0.45\textwidth]{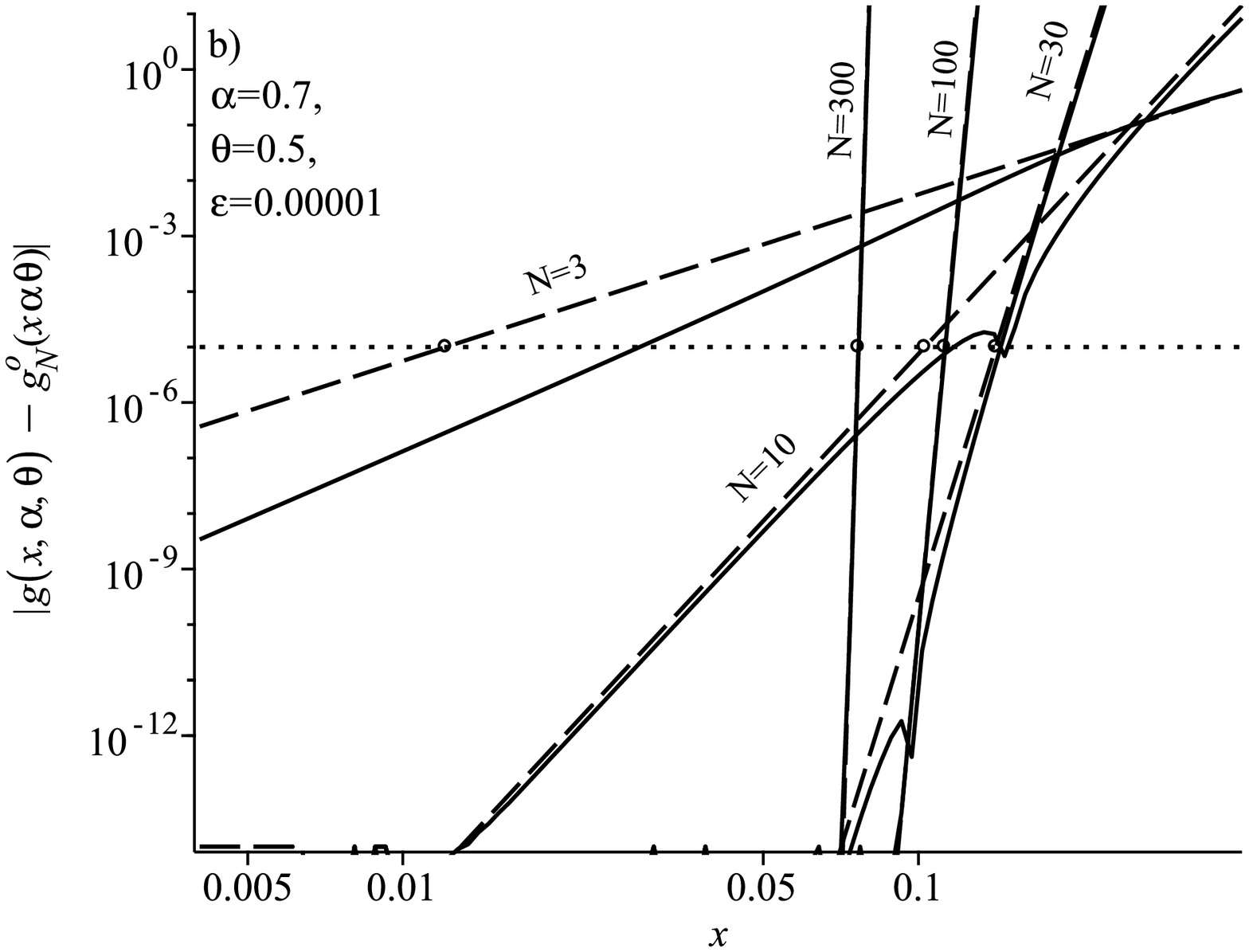}\hfill
  \caption{(a) Probability density $g(x,\alpha,\theta)$ for the parameter values shown in the figure.  The solid curve is an integral representation (\ref{eq:pdfI}),  dashed curves are a series (\ref{eq:gN0}) for different values of the number of  $N$ summands in total. Circles are the position of the threshold coordinate (\ref{eq:x_eps}) for each value $N$. (b) The graph of the absolute error of calculating the probability density using a series (\ref{eq:gN0}) for the case given in Figure (а).  Solid curves are the exact value of the absolute error $|g(x,\alpha,\theta)-g_N^0(x,\alpha,\theta)|$, dashed curves are the estimate (\ref{eq:absRN}), dotted line  shows the position of the specified accuracy level  $\varepsilon$. Circles - demonstrate the location of the threshold coordinate (\ref{eq:x_eps}) for the specified values $N$}\label{fig:a07x0}
\end{figure}
\begin{figure}
  \centering
  \includegraphics[width=0.45\textwidth]{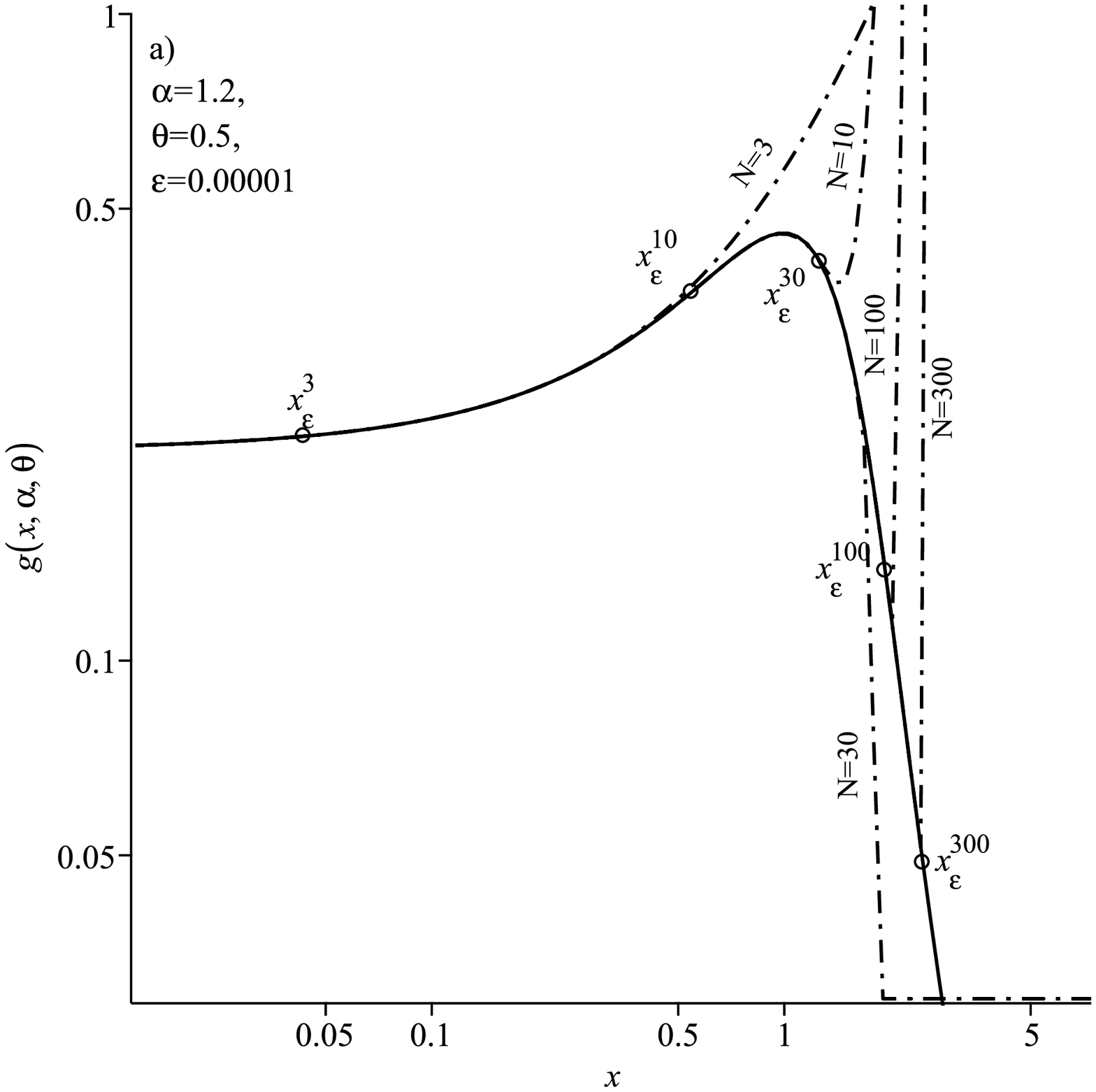}\hfill
  \includegraphics[width=0.45\textwidth]{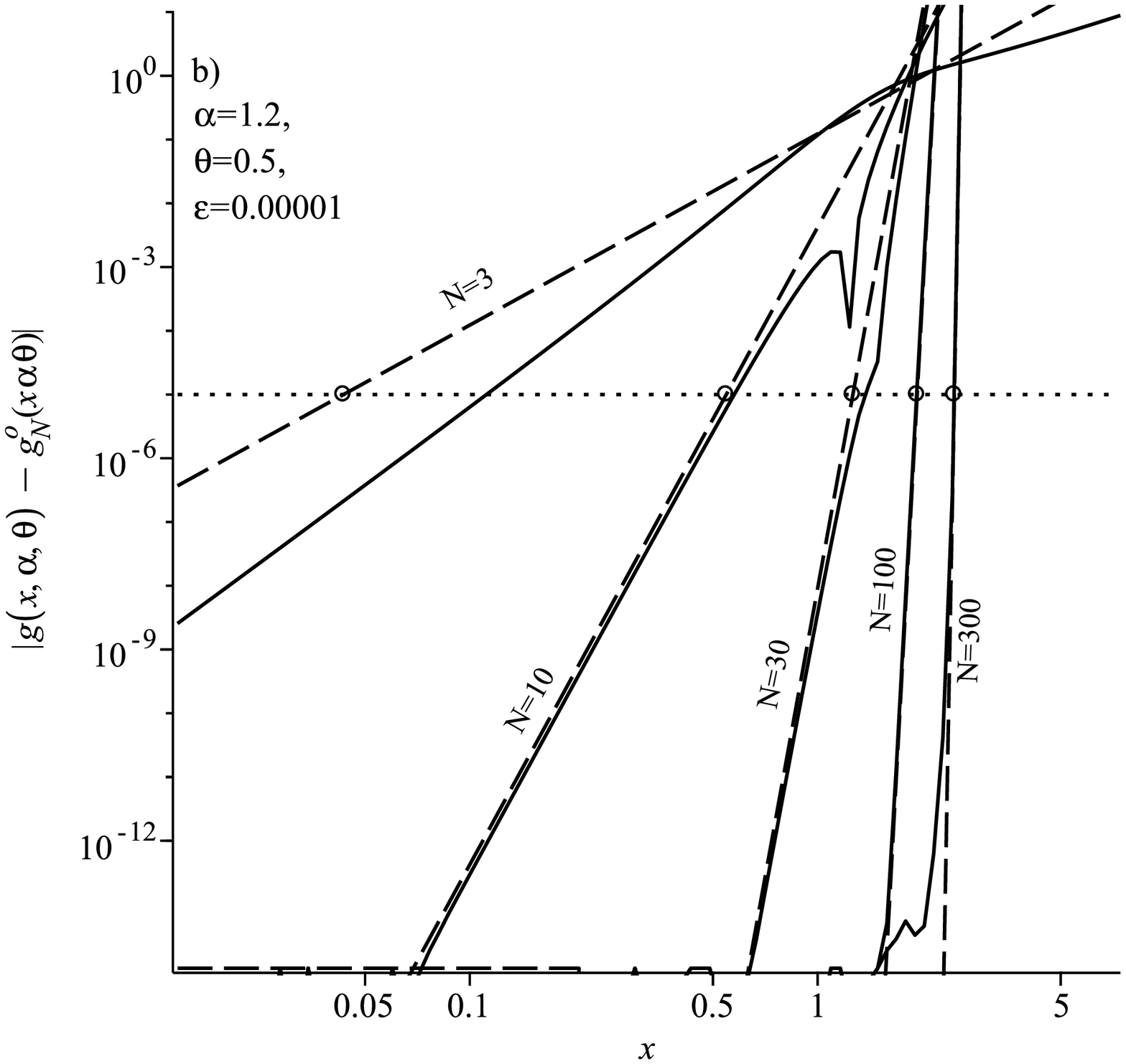}\hfill
  \caption{(a) Probability density $g(x,\alpha,\theta)$ for the parameter values shown in the figure.  A solid curve is the integral representation (\ref{eq:pdfI}),  dashed curves are a series (\ref{eq:gN0}) for different values of the number of $N$ summands in total. Circles are the position of the threshold coordinate (\ref{eq:x_eps}) for each value of $N$. (b) The graph of the absolute error of calculating the probability density using a series (\ref{eq:gN0}) for the case given in Figure (а).  Solid curves are the exact value of the absolute error $|g(x,\alpha,\theta)-g_N^0(x,\alpha,\theta)|$, dashed curves are the estimate (\ref{eq:absRN}), dotted line  shows the position of the specified accuracy level  $\varepsilon$. Circles  - demonstrate the location of the threshold coordinate (\ref{eq:x_eps}) for the specified values of $N$}\label{fig:a12x0}
\end{figure}

Figures~\ref{fig:a07x0}a~and~\ref{fig:a12x0}a show the results of calculating the probability density $g(x,\alpha,\theta)$ using the series (\ref{eq:gN0}).  In these figures, the solid curve corresponds to the exact density values $g(x,\alpha,\theta)$, calculated with the help of (\ref{eq:pdfI}), the dashed-dotted curve corresponds to the density calculation results using the series (\ref{eq:gN0}) for the specified values of $N$.  Figures~\ref{fig:a07x0}b~and~\ref{fig:a12x0}b show the results of the calculation of the absolute error  $|g(x,\alpha,\theta)-g_N^0(x,\alpha,\theta)|$. In these figures, the solid curves correspond to the  exact value of the absolute error $|g(x,\alpha,\theta)-g_N^0(x,\alpha,\theta)|$, where $g(x,\alpha,\theta)$ -- the exact density value calculated when using (\ref{eq:pdfI}), $g_N^0(x,\alpha,\theta)$ - the series (\ref{eq:gN0}), the dashed curve corresponds to the estimate of the remainder term (\ref{eq:absRN}). The calculation results are given for the specified values of $N$ in the figures. In all these figures, the circles show the position of the threshold coordinate $x_\varepsilon^N$ for the selected level of accuracy $\varepsilon$  and each number of summands $N$. It is clear from Figure~\ref{fig:a07x0}b~and~\ref{fig:a12x0}b that in the region $|x|\leqslant x_\varepsilon^N$ the absolute magnitude of the error does not exceed the specified level of accuracy $\varepsilon$ for all  $N$. This means that at $|x|\leqslant x_\varepsilon^N$ the expansion (\ref{eq:gN0}) can be used to calculate the density.

Corollary~\ref{corol:gN0_a><1}  shows that in the case $\alpha<1$ the series (\ref{eq:gN0}) is divergent at $N\to\infty$, and in the case $\alpha>1$ this series converges. The cause of this behavior lies in the ratio $\Gamma((n+1)/\alpha)/\Gamma(n+1)$, which is present in this series. At $\alpha<1$ this ratio turns out to be more than unity and, as $n$ increases, this ratio only rises. Therefore, to achieve the  specified calculation accuracy $\varepsilon$ one has to decrease the value of $x$. It is clearly seen from the behavior of the threshold coordinates $x_\varepsilon^N$.  Figures~\ref{fig:a07x0}a~and~\ref{fig:a07x0}b show that  at first the addition of summands in the expansion (\ref{eq:gN0}) leads to an increase in the range of $x$ for which the inequality $|g(x,\alpha,\theta-g_N^0(x,\alpha,\theta))|\leqslant\varepsilon$  is satisfied.  The fact that $x_\varepsilon^3<x_\varepsilon^{10}<x_\varepsilon^{30}$ testifies to it. However, further addition of summands leads to an increase in the ratio $\Gamma((n+1)/\alpha)/\Gamma(n+1)$ and, thus, to an increase in the absolute calculation error. Therefore, to achieve the specified level of accuracy, it is necessary to decrease the value of the coordinate $x$. This causes the threshold coordinate $x_\varepsilon^N$ to start decreasing and we see that $x_\varepsilon^{100}>x_\varepsilon^{300}$.

In the case $\alpha>1$ the situation changes. In this case the ratio $\Gamma((n+1)/\alpha)/\Gamma(n+1)<1$ and as $n$ increases this ratio only decreases. Consequently, the increase in the number of summands in (\ref{eq:gN0}) increases the accuracy of the density calculation. This leads to the fact that the range of values of $x$, for which the inequality $|g(x,\alpha,\theta)-g_N^0(x,\alpha,\theta)|\leqslant\varepsilon$ is met increases with the addition of the number of summands $N$ in the sum (\ref{eq:gN0}). This is clearly seen from the location of  the threshold coordinates $x_\varepsilon^N$, shown in Figures~\ref{fig:a12x0}a~and~\ref{fig:a12x0}b. We can see from the figures that $x_\varepsilon^3<x_\varepsilon^{10}<x_\varepsilon^{30}<x_\varepsilon^{100}<x_\varepsilon^{300}$. Thus, in the case $\alpha>1$ the series (\ref{eq:gN0}) is convergent for all $x$ at $N\to\infty$.

\begin{figure}
  \centering
  \includegraphics[width=0.45\textwidth]{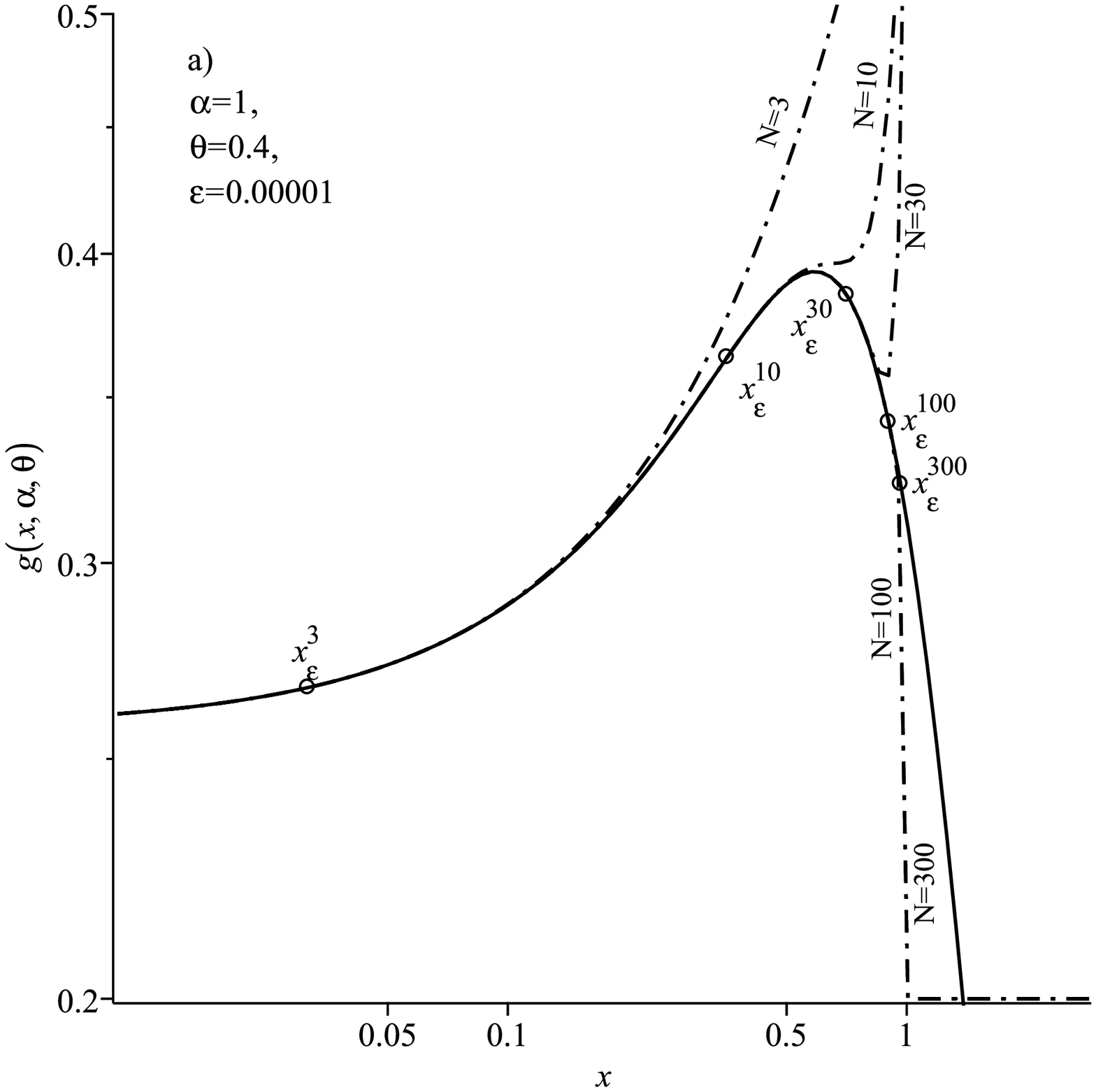}\hfill
  \includegraphics[width=0.45\textwidth]{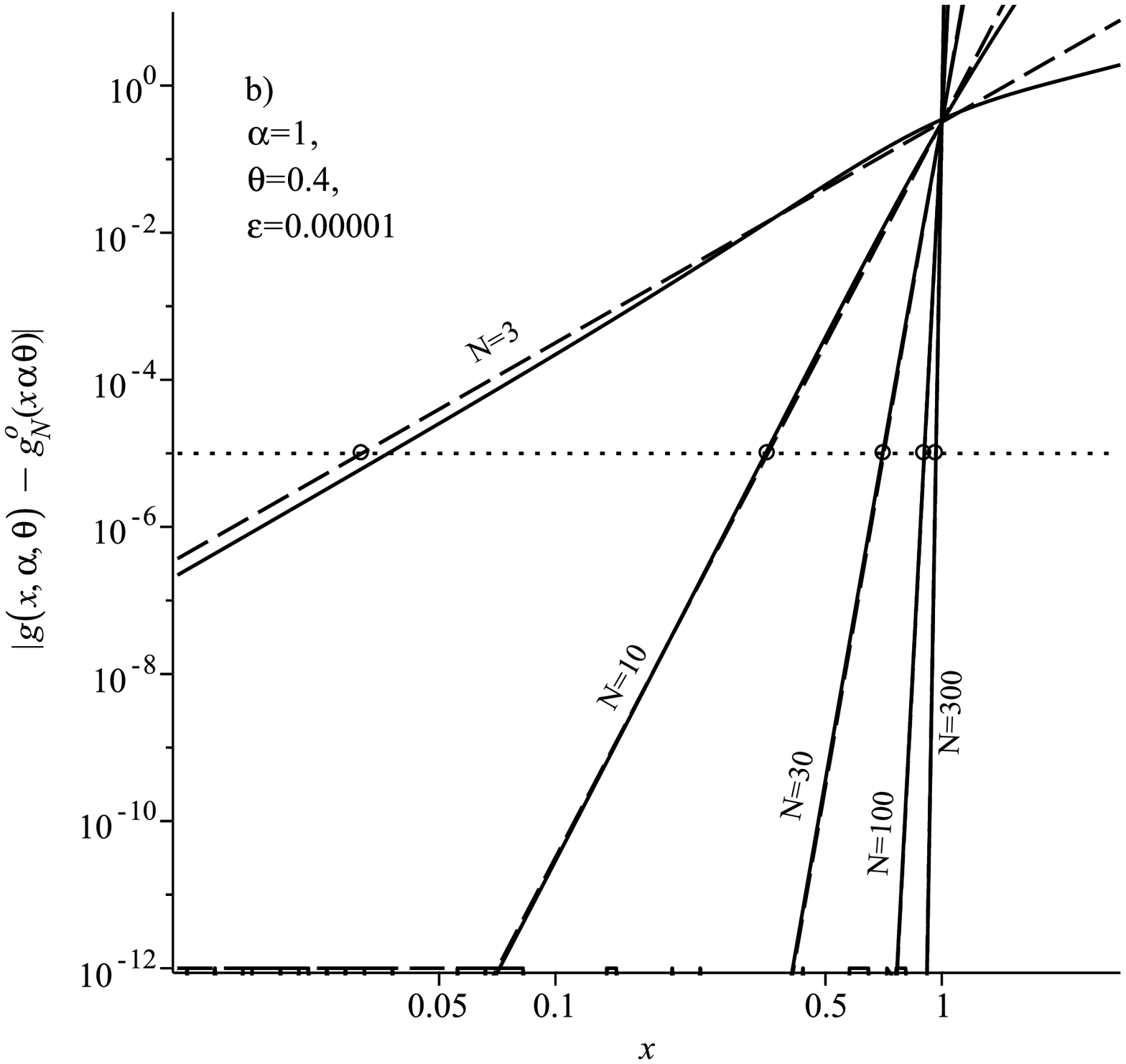}\hfill
  \caption{(a) Probability density $g(x,\alpha,\theta)$ for the parameter values shown in the figure.  A solid curve is the formula (\ref{eq:gx_a1}),  dashed curves are the series (\ref{eq:gN0}) for different values of the number of $N$ summands in total. Circles are the position of the threshold coordinate (\ref{eq:x_eps}) for each value of $N$. (b) The graph of the absolute error of calculating the probability density using a series (\ref{eq:gN0}) for the case given in Figure (а).  Solid curves are the exact value of the absolute error $|g(x,\alpha,\theta)-g_N^0(x,\alpha,\theta)|$, dashed curves are the estimate (\ref{eq:absRN}), dotted line  - shows the position of the specified accuracy level $\varepsilon$. Circles – demonstrate the location of the threshold coordinate (\ref{eq:x_eps}) for the specified values of $N$}\label{fig:a1x0}
\end{figure}

The results of calculations for the case $\alpha=1$ are given in Fig.~\ref{fig:a1x0}.  This figure shows, $x_\varepsilon^{3}<x_\varepsilon^{10}<x_\varepsilon^{30}<x_\varepsilon^{100}<x_\varepsilon^{300}$.  Thus, an increase in the number of summands $N$ in the sum (\ref{eq:gN0}) leads to an increase in the interval $x$, within which the inequality $|g(x,\alpha,\theta)-g_N^0(x,\alpha,\theta)|\leqslant\varepsilon$ is satisfied. Here, the formula (\ref{eq:gx_a1}) was used to calculate the density. It is also seen from the figure that for all presented $N$ at $x\to1$ the partial sums of the series diverge. This is in full agreement with the statement of corollary~\ref{corol:gN0_a><1} which states that in the case $\alpha=1$ the series (\ref{eq:gN0}) converges at $|x|<1$.

\section{Representation of the distribution function in the form of a power series}

Now we will try to obtain the representation of the distribution function in the case $x\to0$ in the form of a power series. We will formulate the result obtained as a theorem

\begin{theorem}\label{th:Gx0_expan}
In the case $x\to0$ for any admissible set of parameters $(\alpha,\theta)$, except for the values $\alpha=1,\theta=\pm1$ for the distribution function $G(x,\alpha,\theta)$ a representation in the form of a power series is valid.
\begin{equation}\label{eq:Gx0}
  G(x,\alpha,\theta)=\frac{1-\theta}{2}+G_N^0(x,\alpha,\theta)+\mathcal{R}_N^0(x,\alpha,\theta),\quad x\to0
\end{equation}
where
\begin{align}
  G_N^0(x,\alpha,\theta) & =\frac{1}{\pi\alpha}\sum_{n=0}^{N-1}\frac{x^{n+1}}{(n+1)!}\Gamma\left(\frac{n+1}{\alpha}\right) \sin\left(\tfrac{\pi}{2}(n+1)(1-\theta)\right), \label{eq:GN0}\\
  |\mathcal{R}_n^0(x,\alpha,\theta)| & \leqslant\frac{|x|^{N+1}}{\alpha\pi (N+1)!}\Gamma\left(\frac{N+1}{\alpha}\right).\label{eq:RGN0}
\end{align}
\end{theorem}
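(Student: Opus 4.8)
The plan is to obtain the expansion of $G(x,\alpha,\theta)$ near the origin simply by integrating the density expansion already established in Theorem~\ref{th:gx0_expan}. Since the law with characteristic function (\ref{eq:CF_formC}) is absolutely continuous with density $g(x,\alpha,\theta)$ and, by (\ref{eq:G_x=0}), $G(0,\alpha,\theta)=\tfrac{1-\theta}{2}$, we have, for every admissible $(\alpha,\theta)$ other than $\alpha=1,\theta=\pm1$ and for every $x$ (with signed integrals understood when $x<0$),
\begin{equation*}
  G(x,\alpha,\theta)=\frac{1-\theta}{2}+\int_0^x g(y,\alpha,\theta)\,dy .
\end{equation*}
First I would insert the representation $g(y,\alpha,\theta)=g_N^0(y,\alpha,\theta)+R_N^0(y,\alpha,\theta)$ from (\ref{eq:gx0}) and split the integral.

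The partial-sum part is immediate: $g_N^0(\cdot,\alpha,\theta)$ is a polynomial of degree $N-1$, so term-by-term integration needs no justification, and using $\int_0^x \frac{y^n}{n!}\,dy=\frac{x^{n+1}}{(n+1)!}$ one gets exactly $G_N^0(x,\alpha,\theta)$ of (\ref{eq:GN0}). For the remainder I would set $\mathcal R_N^0(x,\alpha,\theta):=\int_0^x R_N^0(y,\alpha,\theta)\,dy$; this integral exists because, by (\ref{eq:absRN}), the integrand is dominated by the locally integrable function $\frac{1}{\alpha\pi N!}|y|^N\Gamma\!\left(\frac{N+1}{\alpha}\right)$. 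Integrating that bound over $[0,|x|]$ gives
\begin{equation*}
  |\mathcal R_N^0(x,\alpha,\theta)|\le\frac{|x|^{N+1}}{\alpha\pi (N+1)!}\,\Gamma\!\left(\frac{N+1}{\alpha}\right),
\end{equation*}
which is (\ref{eq:RGN0}). Combining the three contributions yields (\ref{eq:Gx0}), and the excluded pair $\alpha=1,\theta=\pm1$ is inherited from Theorem~\ref{th:gx0_expan} (there the integral in (\ref{eq:gx0_tmp2}) diverges).

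I expect the only point requiring a little care — and the natural candidate for the ``main obstacle'' — to be the legitimacy of integrating the Lagrange-form remainder: $R_N^0(\cdot,\alpha,\theta)$ involves the unspecified quantity $\zeta=\zeta(y)\in(0,1)$, so a priori it is merely a measurable function of $y$. However, measurability together with the explicit dominating function from (\ref{eq:absRN}) is exactly what is needed to make $\int_0^x R_N^0(y,\alpha,\theta)\,dy$ well defined and to control it, so in fact no genuine difficulty arises. As an alternative I could mimic the proof of Theorem~\ref{th:gx0_expan} directly on an inversion formula for $G$ carrying an extra factor $1/t$ under the integral: expanding $e^{-itx}$, interchanging sum and integral on the partial sum, substituting $t^\alpha=\tau$ and applying (\ref{eq:Gamma_intRepr1}), the factor $1/t$ shifts the $\Gamma$-argument so that the coefficient of $x^{n+1}$ again contains $\Gamma((n+1)/\alpha)/(n+1)!$; but this merely duplicates the work, so I would relegate it to a remark. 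Finally, the analogue of Corollary~\ref{corol:gN0_a><1} (convergence for $\alpha>1$; convergence for $|x|<1$ when $\alpha=1$; asymptotic expansion for $\alpha<1$) follows from (\ref{eq:RGN0}) by the same Stirling estimates as in the density case.
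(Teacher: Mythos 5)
Your proposal is correct and follows essentially the same route as the paper: both obtain (\ref{eq:Gx0}) by integrating the density expansion of Theorem~\ref{th:gx0_expan} from $0$ to $x$, with the partial sum integrating term by term to give (\ref{eq:GN0}). The only (cosmetic) difference is in the remainder: you integrate the already-established pointwise bound (\ref{eq:absRN}) over $[0,|x|]$, whereas the paper re-runs the interchange-of-integrals argument with $\left|\int_0^x\xi^N e^{it\xi\zeta}d\xi\right|\leqslant\left|\int_0^x\xi^N d\xi\right|$ and formula (\ref{eq:Gamma_intRepr1}); both yield (\ref{eq:RGN0}).
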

\begin{proof}

From the definition of the distribution function, it follows
  \begin{equation*}
    G(x,\alpha,\theta)-G(0,\alpha,\theta)=\int_{0}^{x}g(\xi,\alpha,\theta)d\xi.
  \end{equation*}
Here $G(0,\alpha,\theta)$ is the value of the distribution function in the point $x=0$ and is determined by the formula (\ref{eq:G_x=0}). Using the expansion (\ref{eq:g_expan}) for the density $g(x,\alpha,\theta)$ we obtain
\begin{equation}\label{eq:G_expan_tmp0}
  G(x,\alpha,\theta)=\frac{1-\theta}{2} +G_N^0(x,\alpha,\theta)+\mathcal{R}_N^0(x,\alpha,\theta),\quad x\to0
\end{equation}
where
\begin{align}
  G_N^0(x,\alpha,\theta)&=\int_{0}^{x}g_N^0(\xi,\alpha,\theta)d\xi,\label{eq:GN0_tmp0} \\
  \mathcal{R}_N^0(x,\alpha,\theta)&=\int_{0}^{x}R_n^0(\xi,\alpha,\theta)d\xi.\label{eq:RGN0_tmp0}
\end{align}
Here $g_N^0(x,\alpha,\theta)$ and $R_N^0(x,\alpha,\theta)$ are determined by the expressions  (\ref{eq:gN0}) and (\ref{eq:RNx0_tmp1}) respectively.

To calculate the partial sum $G_N^0(x,\alpha,\theta)$ we will make use of the results of  theorem~\ref{th:gx0_expan}. It was obtained in this theorem that the partial sum $g_N^0(x,\alpha,\theta)$ has the form (\ref{eq:gN0}).  Substituting the expression (\ref{eq:gN0})  in (\ref{eq:GN0_tmp0}) and changing the order of integration and summation, we get
\begin{multline}\label{eq:GN0_tmp1}
  G_N^0(x,\alpha,\theta)=\frac{1}{\pi\alpha}\sum_{n=0}^{N-1}\frac{1}{n!}\Gamma\left(\frac{n+1}{\alpha}\right) \sin\left(\tfrac{\pi}{2}(n+1)(1-\theta)\right)\int_{0}^{x}\xi^kd\xi=\\
  =\frac{1}{\pi\alpha}\sum_{n=0}^{N-1}\frac{x^{n+1}}{(n+1)!}\Gamma\left(\frac{n+1}{\alpha}\right) \sin\left(\tfrac{\pi}{2}(n+1)(1-\theta)\right).
\end{multline}
Now we obtain the expression for the remainder $\mathcal{R}_N^0(x,\alpha,\theta)$. Substituting the expression (\ref{eq:RNx0_tmp1}) in (\ref{eq:RGN0_tmp0}) and changing the order of integration, we obtain
\begin{multline*}
  \mathcal{R}_N^0(x,\alpha,\theta)=\int_{0}^{x}R_N^0(\xi,\alpha,\theta)d\xi =
  \frac{1}{\pi}\Re\int_{0}^{\infty}\exp\left\{-t^\alpha \exp\left\{i\tfrac{\pi}{2}\alpha\theta\right\}\right\}dt\int_{0}^{x}R_N(it\xi)d\xi=\\
  \frac{1}{\pi}\Re\int_{0}^{\infty}\exp\left\{-t^\alpha \exp\left\{i\tfrac{\pi}{2}\alpha\theta\right\}\right\}\frac{(it)^N}{N!}\int_{0}^{x}\xi^N e^{it\xi\zeta}d\xi,
\end{multline*}
where $0<\zeta<1$. This integral cannot be calculated directly, since the exact value of $\zeta$ is not known. It is only known that $\zeta\in(0,1)$. However, one can obtain an estimate of this integral.

To obtain an estimate for the integral, we use the inequality $\left|\int_{0}^{x}\xi^N e^{it\xi\zeta}d\xi\right|\leqslant \left|\int_{0}^{x}\xi^N d\xi\right|$. As a result, we get
\begin{multline}\label{eq:RGN0_tmp1}
  |\mathcal{R}_N^0(x,\alpha,\theta)|=\frac{1}{\pi}\left|\Re\int_{0}^{\infty}\exp\left\{-t^\alpha \exp\left\{i\tfrac{\pi}{2}\alpha\theta\right\}\right\}\frac{(it)^N}{N!}\int_{0}^{x}\xi^N e^{it\xi\zeta}d\xi,\right|\leqslant\\
  \leqslant \frac{1}{\pi N!}\left|\Re i^N\int_{0}^{\infty}\exp\left\{-t^\alpha \exp\left\{i\tfrac{\pi}{2}\alpha\theta\right\}\right\} t^N \int_{0}^{x}\xi^Nd\xi\right|=\\
   =\frac{1}{\pi (N+1)!}\left|\Re i^N\int_{0}^{\infty}\exp\left\{-t^\alpha \exp\left\{i\tfrac{\pi}{2}\alpha\theta\right\}\right\} t^N  x^{N+1}\right|\leqslant\\
  \leqslant \frac{|x|^{N+1}}{\alpha \pi(N+1)!}\left|\Re i^N\int_{0}^{\infty}\exp\left\{-\tau\exp\left\{i\tfrac{\pi}{2}\alpha\theta\right\}\right\}\tau^{{\frac{N+1}{\alpha}}-1}d\tau\right|=\\ = \frac{|x|^{N+1}}{\alpha \pi(N+1)!}\Gamma\left(\frac{N+1}{\alpha}\right)\left|\Re i^N \exp\left\{-i\tfrac{\pi}{2}\theta(N+1)\right\} \right|\leqslant\\
  \leqslant\frac{|x|^{N+1}}{\alpha\pi (N+1)!}\Gamma\left(\frac{N+1}{\alpha}\right).
\end{multline}
Here, to calculate the outer integral, the integration variable $t^\alpha=\tau$ was first substituted, and then the formula (\ref{eq:Gamma_intRepr1}) was used. It should be noted that the case $\alpha=1$, $\theta=\pm1$ must be excluded from consideration. Indeed, for such parameter values, the argument $\tfrac{\pi}{2}\alpha\theta=\pm\tfrac{\pi}{2}$ and integral  (\ref{eq:Gamma_intRepr1}) will diverge.  Now substituting the expressions (\ref{eq:GN0_tmp1}) and (\ref{eq:RGN0_tmp1})  in (\ref{eq:GN0_tmp1}) we get the statement of the theorem.

\begin{flushright}
  $\Box$
\end{flushright}
\end{proof}

The proved theorem shows that in the vicinity of the point $x=0$ the expansion (\ref{eq:Gx0}) is valid for the distribution function of a strictly stable law with the characteristic function (\ref{eq:CF_formC}). However, as in the case of the probability density, the obtained power series diverges for all $x$ at $\alpha<1$, and in the case $\alpha>1$ is convergent for all $x$. In the case $\alpha=1$ this series converges at $|x|<1$ and diverges at $|x|\geqslant1$. In this regard, for the values $\alpha<1$ the representation (\ref{eq:Gx0}) is asymptotic, and for the values $\alpha>1$ the expansion $G(x,\alpha,\theta)$ can be represented in the form of an infinite power series. We formulate this result as a corollary.

\begin{corollary}\label{corol:GN0_a><1}
In the case $\alpha<1$ the series (\ref{eq:GN0}) diverges for all $x$ at $N\to\infty$. In this case the asymptotic expansion is valid for the distribution function $G(x,\alpha,\theta)$ for any admissible $\theta$
\begin{equation*}
  G(x,\alpha,\theta)\sim \frac{1-\theta}{2}+\frac{1}{\pi\alpha}\sum_{n=0}^{N-1}\frac{x^{n+1}}{(n+1)!}\Gamma\left(\frac{n+1}{\alpha}\right) \sin\left(\tfrac{\pi}{2}(n+1)(1-\theta)\right), \quad x\to0.
\end{equation*}

In the case $\alpha=1$ the series (\ref{eq:Gx0}) converges at $|x|<1$. In this case the distribution function $G(x,1,\theta)$ for any $\theta\neq\pm1$ can be represented as an infinite series
\begin{equation}\label{eq:Gx0_a1}
  G(x,1,\theta)=\frac{1-\theta}{2}+\frac{1}{\pi}\sum_{n=0}^{\infty}\frac{x^{n+1}}{n+1}\sin\left(\tfrac{\pi}{2}(n+1)(1-\theta)\right).
\end{equation}

In the case $\alpha>1$ the series (\ref{eq:GN0}) at $N\to\infty$ converges for any $x$. In this case the representation in the form of an infinite power series  is true for the distribution function $G(x,\alpha,\theta)$ for any admissible $\theta$
\begin{equation*}
  G(x,\alpha,\theta)=\frac{1-\theta}{2}+\frac{1}{\pi\alpha}\sum_{n=0}^{\infty}\frac{x^{n+1}}{(n+1)!} \Gamma\left(\frac{n+1}{\alpha}\right)\sin\left(\frac{\pi}{2}(n+1)(1-\theta)\right).
\end{equation*}
\end{corollary}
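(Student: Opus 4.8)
The plan is to imitate, almost line for line, the proof of Corollary~\ref{corol:gN0_a><1}, now starting from the remainder estimate~(\ref{eq:RGN0}) furnished by Theorem~\ref{th:Gx0_expan}. The whole trichotomy is again governed by the quotient $\Gamma\!\left(\tfrac{N+1}{\alpha}\right)/(N+1)!$; compared with the density case the only change is that the denominator $N!$ of the density remainder is replaced by $(N+1)!$ and an extra power of $|x|$ appears, which can only help convergence. So first I would record, for a fixed admissible $\theta$,
\[
  \Bigl|G(x,\alpha,\theta)-\tfrac{1-\theta}{2}-G_N^0(x,\alpha,\theta)\Bigr|=|\mathcal{R}_N^0(x,\alpha,\theta)|\le \frac{|x|^{N+1}}{\alpha\pi(N+1)!}\,\Gamma\!\left(\frac{N+1}{\alpha}\right),
\]
and then study the right-hand side either as $x\to0$ with $N$ fixed, or as $N\to\infty$ with $x$ fixed, in the three regimes $\alpha<1$, $\alpha=1$, $\alpha>1$.

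For $\alpha<1$ the bound above immediately gives $\mathcal{R}_N^0=O(x^{N+1})$ as $x\to0$ for every fixed $N$, hence $G(x,\alpha,\theta)=\tfrac{1-\theta}{2}+G_N^0(x,\alpha,\theta)+O(x^{N+1})$; since the series~(\ref{eq:GN0}) itself diverges as $N\to\infty$ (the Cauchy root test computation of Corollary~\ref{corol:gN0_a><1} applies verbatim, because replacing $n!$ by $(n+1)!$ multiplies the $n$-th root by $(n+1)^{1/n}\to1$ and does not change the limit), this is exactly the definition of an asymptotic expansion, which is the first assertion. For $\alpha>1$ I would fix $x$ and let $N\to\infty$, applying Stirling's formula~(\ref{eq:Stirling}) to $\Gamma\!\left(\tfrac{N+1}{\alpha}\right)/\Gamma(N+1)$ exactly as in Corollary~\ref{corol:gN0_a><1}; since $\tfrac1\alpha-1<0$ the dominant factor $e^{(N+1)(1/\alpha-1)(\ln(N+1)-1)}\to0$, so the right-hand side of the displayed estimate vanishes and $G_N^0(x,\alpha,\theta)\to G(x,\alpha,\theta)-\tfrac{1-\theta}{2}$ for every $x$, giving the stated infinite series.

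For $\alpha=1$ we have $\Gamma(N+1)=N!$, so the estimate collapses to $|\mathcal{R}_N^0|\le |x|^{N+1}/(\pi(N+1))$, which tends to $0$ whenever $|x|<1$ (and diverges for $|x|>1$); therefore $G_N^0(x,1,\theta)\to G(x,1,\theta)-\tfrac{1-\theta}{2}$ on $(-1,1)$. Substituting $\alpha=1$ in~(\ref{eq:GN0}) and using $\Gamma(n+1)=n!$ turns the coefficient $\Gamma(n+1)/\bigl((n+1)!\,\pi\alpha\bigr)$ into $1/\bigl((n+1)\pi\bigr)$, which is precisely~(\ref{eq:Gx0_a1}).

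I do not expect a genuine obstacle: every ingredient — the remainder bound~(\ref{eq:RGN0}), the Stirling estimate, the root test — is already available from Theorem~\ref{th:Gx0_expan} and the proof of Corollary~\ref{corol:gN0_a><1}. The only points deserving a word of care are (i) checking that passing from $n!$ to $(n+1)!$ leaves the root-test limit unchanged in the divergent regime $\alpha<1$, and (ii) noting that although the improved remainder $|x|^{N+1}/(\pi(N+1))$ also tends to $0$ at $|x|=1$, the convergence statement is nonetheless formulated for $|x|<1$ in line with the density corollary; both are handled in a sentence.
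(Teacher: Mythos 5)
Your proposal is correct and follows essentially the same route as the paper: the Cauchy root test with Stirling's formula for the convergence trichotomy, the remainder bound~(\ref{eq:RGN0}) read as $O(x^{N+1})$ for the asymptotic statement when $\alpha<1$, and the same bound sent to zero as $N\to\infty$ for $\alpha>1$ and for $\alpha=1$, $|x|<1$. Your observations that the root-test limit is unchanged by the passage from $n!$ to $(n+1)!$ and that the $\alpha=1$ remainder collapses to $|x|^{N+1}/(\pi(N+1))$ are accurate (indeed slightly sharper than the paper's own limit display at $|x|=1$), and the substitution yielding~(\ref{eq:Gx0_a1}) is carried out exactly as in the paper.
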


\begin{proof}
We examine the convergence of the series (\ref{eq:GN0}). It is clear that this series is sign-alternating. Consequently
  \begin{multline*}
    G_N^0(x,\alpha,\theta)\leqslant|G_N^0(x,\alpha,\theta)|\leqslant\\ \frac{1}{\pi\alpha}\sum_{n=0}^{N-1}\frac{|x|^{n+1}}{(n+1)!}\Gamma\left(\frac{n+1}{\alpha}\right) \left|\sin\left(\tfrac{\pi}{2}(n+1)(1-\theta)\right)\right|\leqslant
    \frac{1}{\pi\alpha}\sum_{n=0}^{N-1}\frac{|x|^{n+1}}{(n+1)!}\Gamma\left(\frac{n+1}{\alpha}\right).
  \end{multline*}
We apply the Cauchy criterion in the limiting form to the obtained series. Using Stirling’s formula (\ref{eq:Stirling}) and taking into consideration that $n+2\approx n+1$ at $n\to\infty$, we get
  \begin{multline*}
\lim_{n\to\infty}\left(\frac{|x|^{n+1}}{\alpha\pi}\frac{\Gamma\left(\frac{n+1}{\alpha}\right)}{(n+1)!}\right)^{1/n}=
\lim_{n\to\infty}\left(\frac{|x|^{n+1}}{\alpha\pi}\frac{\Gamma\left(\frac{n+1}{\alpha}\right)}{\Gamma(n+2)}\right)^{1/n}=\\
     = \lim_{n\to\infty}\frac{|x|^{1+\frac{1}{n}}}{(\alpha\pi)^{1/n}} \frac{\left(\exp\left\{-\frac{n+1}{\alpha}\right\}\left(\frac{n+1}{\alpha}\right)^{\frac{n+1}{\alpha}-\frac{1}{2}} \sqrt{2\pi}\right)^{\frac{1}{n}}} {\left(\exp\left\{-(n+2)\right\}(n+2)^{n+2-\frac{1}{2}}\sqrt{2\pi}\right)^{\frac{1}{n}}}=\\
     \lim_{n\to\infty}\frac{|x|^{1+\frac{1}{n}}}{(\alpha\pi)^{1/n}}
     \frac{\exp\left\{-\frac{1}{\alpha}\left(1+\frac{1}{n}\right)\right\} \left(\frac{n+1}{\alpha}\right)^{\frac{1}{\alpha}\left(1+\frac{1}{n}\right)-\frac{1}{2n}}} {\exp\left\{-1-\frac{1}{2n}\right\}(n+2)^{1-\frac{3}{2n}}}= \lim_{n\to\infty}|x|e^{1-\frac{1}{\alpha}}\alpha^{-\frac{1}{\alpha}}\frac{(n+1)^\frac{1}{\alpha}}{n+2}=\\
     =\lim_{n\to\infty}|x|e^{1-\frac{1}{\alpha}}\alpha^{-\frac{1}{\alpha}} (n+1)^{\frac{1}{\alpha}-1}=
     \left\{\begin{array}{cc}
              \infty, & \alpha<1, \\
              |x|, & \alpha=1, \\
              0, & \alpha>1.
            \end{array}\right.
  \end{multline*}
This shows that in the case $\alpha<1$ the series (\ref{eq:GN0}) diverges for all $x$, in the case $\alpha>1$ the series converges for all $x$, and in the case $\alpha=1$ the series (\ref{eq:GN0}) converges if $|x|<1$.

Now we consider the case $\alpha<1$. In this case at $N\to\infty$ the series (\ref{eq:GN0}) diverges. However, it follows from the expression (\ref{eq:RGN0}) that  for some fixed $N$
\begin{equation*}
  \mathcal{R}_N^0(x,\alpha,\theta)=O\left(x^{N+1}\right),\quad x\to0.
\end{equation*}
Consequently, for each $N$ we have
\begin{equation*}
  G(x,\alpha,\theta)=\frac{1-\theta}{2}+\frac{1}{\pi\alpha}\sum_{n=0}^{N-1}\frac{x^{n+1}}{(n+1)!} \Gamma\left(\frac{n+1}{\alpha}\right)\sin\left(\frac{\pi}{2}(n+1)(1-\theta)\right)+O\left(x^{N+1}\right),\quad x\to0.
\end{equation*}
Thus, we have obtained the definition of an asymptotic series. Consequently,
\begin{equation*}
  G(x,\alpha,\theta)\sim\frac{1-\theta}{2}+\frac{1}{\pi\alpha}\sum_{n=0}^{N-1}\frac{x^{n+1}}{(n+1)!} \Gamma\left(\frac{n+1}{\alpha}\right)\sin\left(\frac{\pi}{2}(n+1)(1-\theta)\right),\quad x\to0,\quad\alpha<1.
\end{equation*}

Now we consider the case $\alpha>1$. In this case the series (\ref{eq:GN0}) is convergent. It follows from the expressions (\ref{eq:Gx0}) and (\ref{eq:RGN0}) that
  \begin{equation}\label{eq:absG_GN0}
    \left|G(x,\alpha,\theta)-(1-\theta)/2-G_N^0(x,\alpha,\theta)\right|\leqslant\frac{|x|^{N+1}}{\alpha\pi(N+1)!} \Gamma\left(\frac{N+1}{\alpha}\right).
  \end{equation}
We will find the limit at $N\to\infty$ of the right-hand side of this inequality. Using Stirling’s formula (\ref{eq:Stirling}) and taking into account that $N+2\approx N+1$ at $N\to\infty$, we obtain
\begin{multline*}
\frac{1}{\alpha\pi}\lim_{N\to\infty}\frac{\Gamma\left(\frac{N+1}{\alpha}\right)}{(N+1)!}|x|^{N+1}=
\frac{1}{\alpha\pi}\lim_{N\to\infty}\frac{\Gamma\left(\frac{N+1}{\alpha}\right)}{\Gamma(N+2)}|x|^{N+1}=
    \frac{1}{\alpha\pi}\lim_{N\to\infty}\frac{e^{-\frac{N+1}{\alpha}} \left(\frac{N+1}{\alpha}\right)^{\frac{N+1}{\alpha}-\frac{1}{2}}\sqrt{2\pi}}{e^{-(N+2)}(N+2)^{N+2-1/2}\sqrt{2\pi}} |x|^{N+1}=\\
    =\frac{1}{\alpha\pi}\lim_{N\to\infty}\frac{e^{-\frac{N+1}{\alpha}} \left(\frac{N+1}{\alpha}\right)^{\frac{N+1}{\alpha}-\frac{1}{2}}}{e^{-(N+1)}(N+1)^{N+1-1/2}} |x|^{N+1}=
    \frac{\alpha^{-\frac{1}{2}-\frac{1}{\alpha}}}{\pi}\lim_{N\to\infty}\alpha^{-\frac{N}{\alpha}} e^{(N+1)\left(1-\frac{1}{\alpha}\right)}(N+1)^{(N+1)\left(\frac{1}{\alpha}-1\right)}|x|^{N+1}=\\
    \frac{\alpha^{-\frac{1}{2}-\frac{1}{\alpha}}}{\pi}\lim_{N\to\infty} \exp\left\{(N+1)\left(1-\frac{1}{\alpha}\right)(1-\ln(N+1))-\frac{N}{\alpha}\ln\alpha\right\}|x|^{N+1}
    = \left\{\begin{array}{cc}
               \infty, & \alpha<1, \\
               0, & \alpha>1, \\
               0, & \alpha=1,\ |x|<1, \\
               \infty, & \alpha=1,\ |x|\geqslant1.
             \end{array}\right.
  \end{multline*}
Thus, in the two cases $\alpha>1$ and $\alpha=1,\ |x|<1$ the right side of the inequality (\ref{eq:absG_GN0}) is an element of an infinitesimal sequence. In its turn, this means that  in the above two cases, for any fixed $x$, the sequences $(1-\theta)/2+G_N^0(x,\alpha,\theta)$ converge to the distribution function $G(x,\alpha,\theta)$.  Therefore, in the considered case $\alpha>1$ for any fixed $x$ the distribution function can be represented as an infinite series.
  \begin{equation*}
    G(x,\alpha,\theta)=\frac{1-\theta}{2}+\frac{1}{\pi\alpha}\sum_{n=0}^{\infty}\frac{x^{n+1}}{(n+1)!} \Gamma\left(\frac{n+1}{\alpha}\right)\sin\left(\frac{\pi}{2}(n+1)(1-\theta)\right).
  \end{equation*}

Now we consider the case $\alpha=1$. As shown above, in this case, when the condition $|x|<1$ is met, the right side (\ref{eq:absG_GN0}) is an element of an infinitesimal series. Therefore, for any fixed $|x|<1$ the representation in the form of an infinite series is true for the distribution function
  \begin{equation*}
    G(x,1,\theta)=\frac{1-\theta}{2}+\frac{1}{\pi\alpha}\sum_{n=0}^{\infty}\frac{x^{n+1}}{(n+1)} \sin\left(\frac{\pi}{2}(n+1)(1-\theta)\right).
  \end{equation*}
Thus, the corollary has been proved completely.
  \begin{flushright}
    $\Box$
  \end{flushright}
\end{proof}

As in the case of the probability density, the proved property shows that in the case $\alpha=1$ and $|x|<1$ the series (\ref{eq:Gx0_a1}) converges to the distribution function $G(x,1,\theta)$. It is possible to show that this series converges to the distribution function (\ref{eq:Gx_a1}). We will formulate this result as a remark

\begin{remark}\label{rm:cdf_a1}
In the case $\alpha=1$ for any $-1<\theta<1$ in the region $-1<x<1$ the series (\ref{eq:Gx0_a1}) converges to the distribution function (\ref{eq:Gx_a1}).
\end{remark}

\begin{proof}
To prove this remark, we proceed in the same way as in the proof of remark~\ref{rm:pdf_a1}. Let us show that the expansion of the distribution function (\ref{eq:Gx_a1}) into a Taylor series in the vicinity of the point $x=0$ has the form (\ref{eq:Gx0_a1}). We will use the reduction formulas $\cos\left(\tfrac{\pi}{2}\theta\right)=\sin\left(\tfrac{\pi}{2}(1-\theta)\right)$ and $\sin\left(\tfrac{\pi}{2}\theta\right)=\cos\left(\tfrac{\pi}{2}(1-\theta)\right)$  and will write the distribution function (\ref{eq:Gx_a1}) in the form
\begin{equation*}
  G(x,1,\theta)=\frac{1}{2}+\frac{1}{\pi}\arctan\left(\frac{x-\cos\left(\frac{\pi}{2}(1-\theta)\right)} {\sin\left(\frac{\pi}{2}(1-\theta)\right)}\right).
\end{equation*}

Note that the function $\arctan(x)$ is infinitely differentiable, therefore, expanding it into an infinite series, we obtain
\begin{equation}\label{eq:G1_expan_tmp0}
  G(x,1,\theta)=G(0,1,\theta)+\sum_{n=1}^{\infty}\frac{1}{n!}\left.\frac{d^n G(x,1,\theta)}{dx^n}\right|_{x=0}x^n.
\end{equation}
For the derivative of the order $n$ we have
\begin{multline}\label{eq:G_der}
   \frac{d^n G(x,1,\theta)}{dx^n}=\frac{d^{n-1}}{dx^{n-1}} \frac{dG(x,1,\theta)}{dx}=\frac{1}{\pi}\frac{d^{n-1}}{dx^{n-1}}\frac{1}{1+\left(\frac{x-\cos\left(\frac{\pi}{2}(1-\theta)\right)} {\sin\left(\frac{\pi}{2}(1-\theta)\right)}\right)^2}\frac{1}{\sin\left(\frac{\pi}{2}(1-\theta)\right)}=\\
   =\frac{1}{\pi}\frac{d^{n-1}}{dx^{n-1}}\frac{\sin\left(\frac{\pi}{2}(1-\theta)\right)}{x^2-2x\cos\left(\frac{\pi}{2}(1-\theta)\right)+1}=
   \frac{d^{n-1} g(x,1,\theta)}{dx^{n-1}}.
\end{multline}

Thus, the problem has been reduced to calculating  the derivative $n-1$ of the probability density $g(x,1,\theta)$. However, this problem has been solved by us when proving remark~\ref{rm:pdf_a1}.  Using the formula (\ref{eq:g_n_der}) we get
\begin{equation*}
   \frac{d^{n-1} g(x,1,\theta)}{dx^{n-1}}= \frac{\sin\left(\frac{\pi}{2}(1-\theta)\right)}{\pi} \sum_{k=0}^{\left[\tfrac{n-1}{2}\right]} \frac{(-1)^{n-k-1} (n-1)! (n-k-1)!}{k!(n-2k-1)!}\frac{\left(2x-2\cos\left(\tfrac{\pi}{2}(1-\theta)\right)\right)^{n-2k-1}} {\left(x^2-2x\cos\left(\tfrac{\pi}{2}(1-\theta)\right)+1\right)^{n-k}}
\end{equation*}
Substituting this expression in (\ref{eq:G_der}) and calculating the value of the obtained derivative in the point $x=0$ and then using (\ref{eq:sin_na}), we obtain
\begin{multline*}
  \left.\frac{d^n G(x,1,\theta)}{dx^n}\right|_{x=0}=
  \frac{\sin\left(\frac{\pi}{2}(1-\theta)\right)}{\pi} \sum_{k=0}^{\left[\tfrac{n-1}{2}\right]} \frac{(-1)^{k} (n-1)! (n-k-1)!}{k!(n-2k-1)!}\left(2\cos\left(\tfrac{\pi}{2}(1-\theta)\right)\right)^{n-2k-1}=\\
  = \tfrac{1}{\pi} (n-1)!\sin\left(\tfrac{\pi}{2}n(1-\theta)\right),
\end{multline*}
where it was taken into account that $(-1)^{2n-2-3k}=(-1)^{k}$.

Substituting now the obtained expression for the $n$-th derivative in (\ref{eq:G1_expan_tmp0}) and taking into consideration (\ref{eq:G_x=0}), we get
\begin{equation*}
  G(x,1,\theta)=\tfrac{1}{2}(1-\theta)+\frac{1}{\pi}\sum_{n=1}^{\infty}\frac{x^n}{n}\sin\left(\tfrac{\pi}{2}n(1-\theta)\right)
  =\tfrac{1}{2}(1-\theta)+\frac{1}{\pi}\sum_{k=0}^{\infty}\frac{x^{k+1}}{k+1}\sin\left(\tfrac{\pi}{2}(k+1)(1-\theta)\right).
\end{equation*}
Here, in the last equality, the summation index $n=k+1$ was changed. Thus, the expansion of the distribution function (\ref{eq:Gx_a1}) into a Taylor series in the vicinity of the point $x=0$ exactly coincides with the series (\ref{eq:Gx0_a1}). This completely proves the remark.

\begin{flushright}
  $\Box$
\end{flushright}
\end{proof}

Theorem~\ref{th:Gx0_expan} gives an opportunity to find the range of values of the coordinate $x$ within which the absolute error of calculating $G(x,\alpha,\theta)$ using the expansion (\ref{eq:Gx0}) will not exceed the pre-specified value. Indeed, from (\ref{eq:Gx0}) and (\ref{eq:RGN0}) we have
\begin{equation*}
  \left|G(x,\alpha,\theta)-\tfrac{1}{2}(1-\theta)-G_N^0(x,\alpha,\theta)\right|\leqslant\frac{|x|^{N+1}}{\alpha\pi(N+1)!}
  \Gamma\left(\frac{N+1}{\alpha}\right).
\end{equation*}
If now, for a specified fixed $N$ we set the absolute magnitude of the error $\left|G(x,\alpha,\theta)-\tfrac{1}{2}(1-\theta)-G_N^0(x,\alpha,\theta)\right|=\varepsilon$, then it is possible to introduce the threshold coordinate
\begin{equation}\label{eq:x_eps_cdf}
  x_\varepsilon^N=\left(\frac{\pi\varepsilon\alpha (N+1)!}{\Gamma\left(\frac{N+1}{\alpha}\right)}\right)^{\frac{1}{N+1}}
\end{equation}
This value shows that for all $x$ satisfying the condition $|x|\leqslant x_\varepsilon^N$, the absolute magnitude of the error in calculating the distribution function using the expansion  (\ref{eq:Gx0}) will not exceed the value $\varepsilon$:
\begin{equation}\label{eq_x_eps_cond}
  \left|G(x,\alpha,\theta)-\tfrac{1}{2}(1-\theta)-G_N^0(x,\alpha,\theta)\right|\leqslant\varepsilon,\quad -x_\varepsilon^N\leqslant x\leqslant x_\varepsilon^N.
\end{equation}

\begin{figure}
  \centering
  \includegraphics[width=0.45\textwidth]{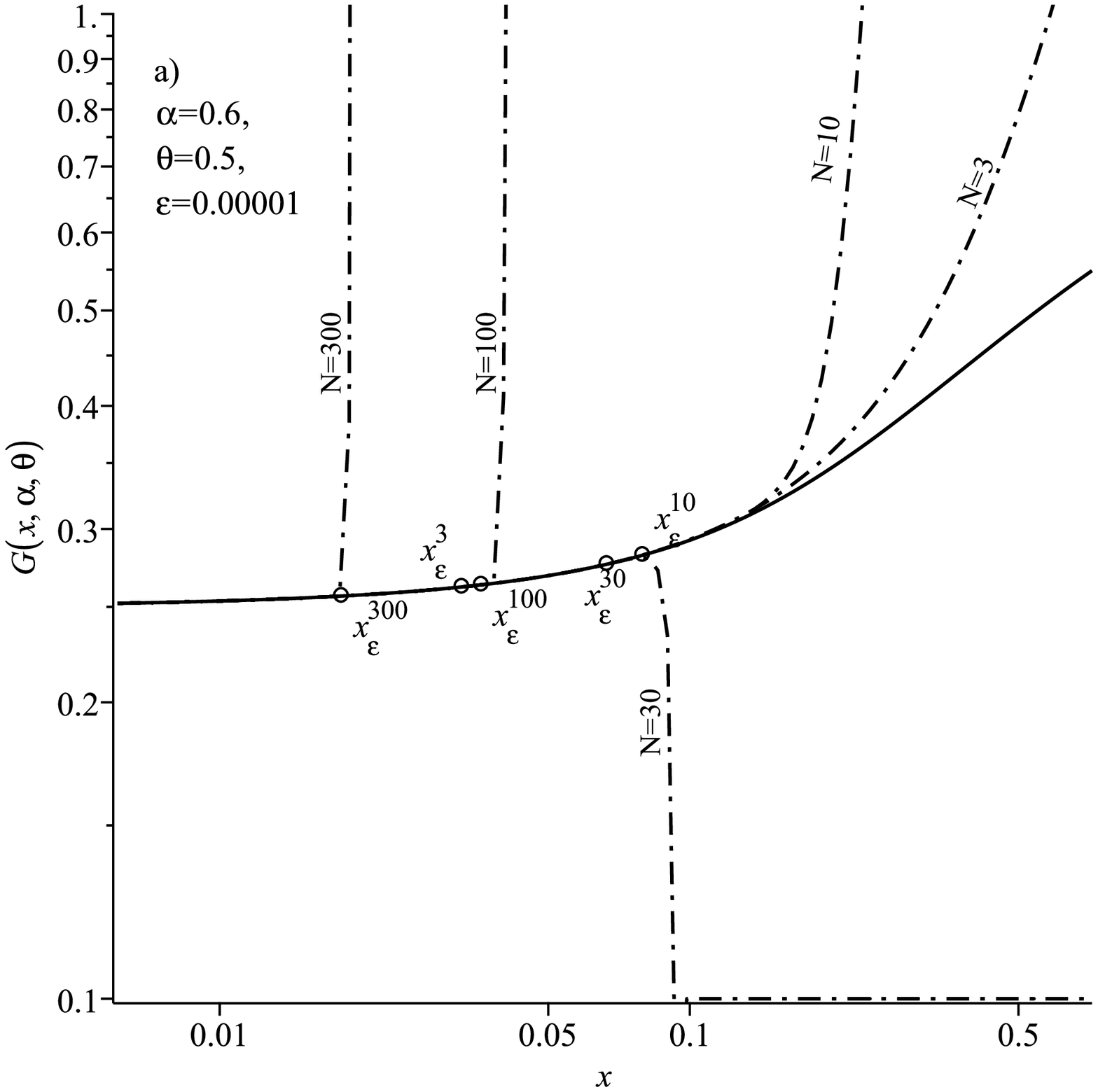}\hfill
  \includegraphics[width=0.45\textwidth]{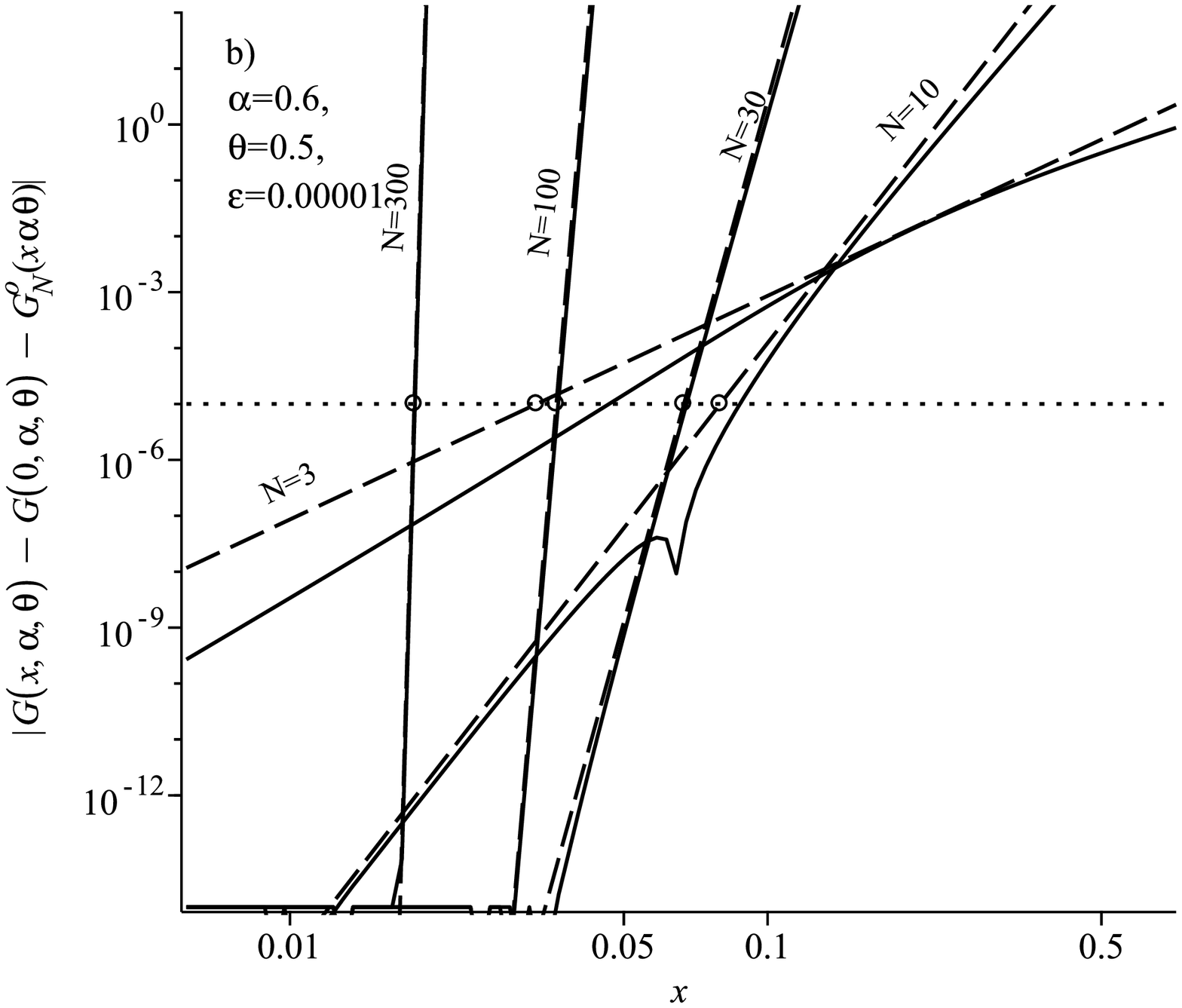}\\
  \caption{(a) Distribution function $G(x,\alpha,\theta)$ for the parameter values shown in the figure.  A solid curve is the integral representation (\ref{eq:cdfI}),  dashed curves are the expansion (\ref{eq:Gx0}) for different values of the number of  $N$ summands in total. Circles are the position of the threshold coordinate (\ref{eq:x_eps_cdf}) for each value $N$. (b) The graph of the absolute error of calculating the distribution function using the expansion (\ref{eq:Gx0}) for the case given in Figure (а).  Solid curves - the exact value of the absolute error $|G(x,\alpha,\theta)- \frac{1}{2}(1-\theta)- G_N^0(x,\alpha,\theta))|$, dashed curves are the estimate (\ref{eq:RGN0}), dotted line shows the position  the specified accuracy level $\varepsilon$. Circles - demonstrate the location of the threshold coordinate (\ref{eq:x_eps_cdf}) for the specified values of $N$}\label{fig:cdfa06}
\end{figure}

\begin{figure}
  \centering
  \includegraphics[width=0.45\textwidth]{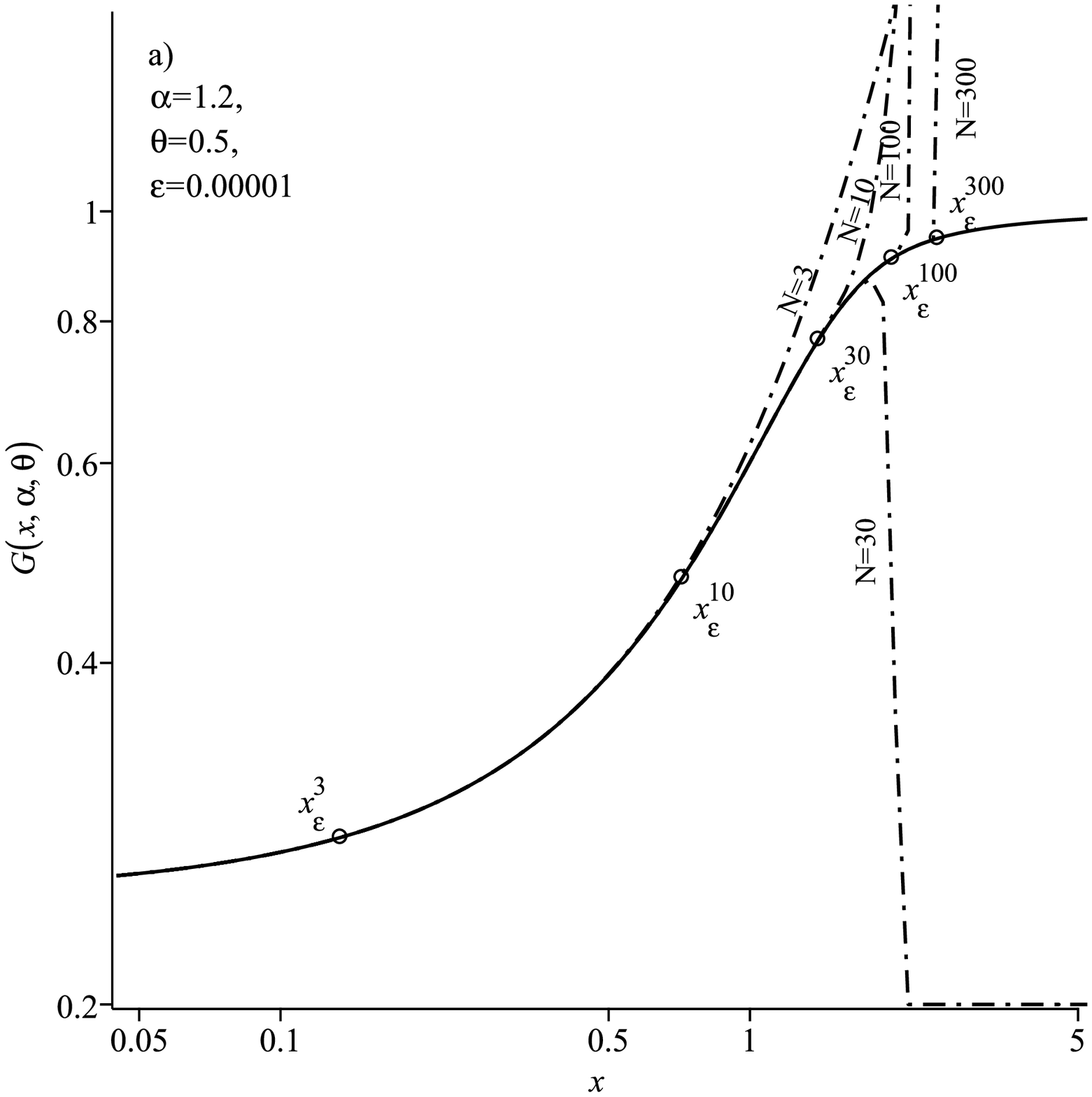}\hfill
  \includegraphics[width=0.45\textwidth]{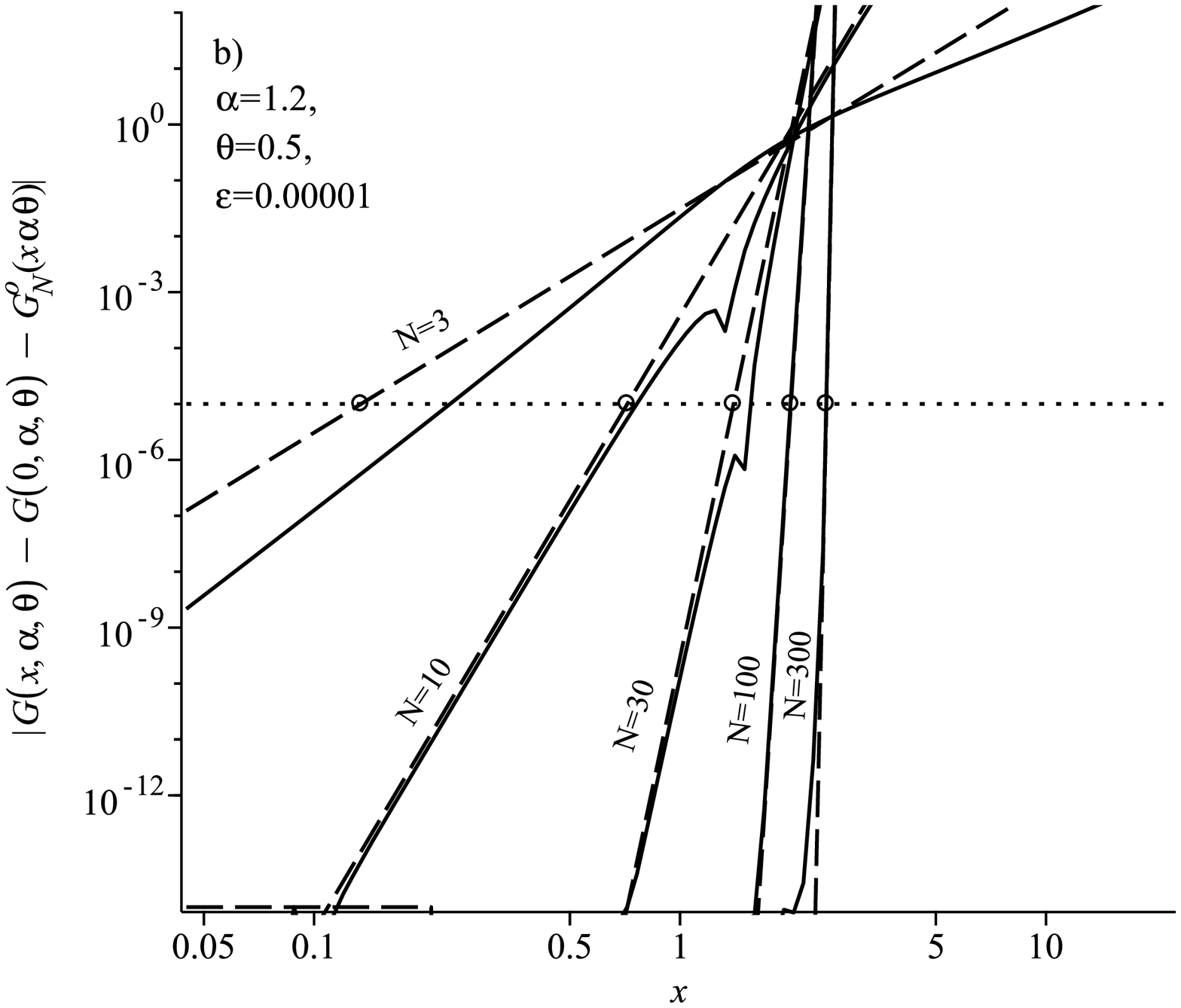}\\
  \caption{(a) Distribution function $G(x,\alpha,\theta)$ for the parameter values shown in the figure.  A solid curve is the integral representation (\ref{eq:cdfI}),  dashed curves are the expansion (\ref{eq:Gx0}) for different values of the number of  $N$ summands in total. Circles are the position of the threshold coordinate (\ref{eq:x_eps_cdf}) for each value of $N$. (b) The graph of the absolute error of calculating the distribution function using the expansion  (\ref{eq:Gx0}) for the case given in Figure (а).  Solid curves - the exact value of the absolute error $|G(x,\alpha,\theta)- \frac{1}{2}(1-\theta)- G_N^0(x,\alpha,\theta))|$, dashed curves are the estimate (\ref{eq:RGN0}), dotted line shows the position  the specified accuracy level $\varepsilon$. Circles - demonstrate the location of the threshold coordinate (\ref{eq:x_eps_cdf}) for the specified values of $N$}\label{fig:cdfa12}
\end{figure}

Figures~\ref{fig:cdfa06}a and \ref{fig:cdfa12}a show the calculation results of the distribution function using the integral representation (\ref{eq:cdfI}) (solid curves)  and using the expansion (\ref{eq:Gx0}) (dash-dotted curves) for parameter values $\alpha=0.6$, $\theta=0.5$ and $\alpha=1.2$, $\theta=0.5$ respectively. These figures contain the results of calculating the distribution function using the expansion (\ref{eq:Gx0}) for the values $N=3,10,30,100,300$. Figures~\ref{fig:cdfa06}b and \ref{fig:cdfa12}b show the results of calculating the absolute error. In these figures the dashed curve corresponds to the estimate of the remainder (\ref{eq:RGN0}), solid curves  -- the exact value of the absolute error $|G(x,\alpha,\theta)- \frac{1}{2}(1-\theta)- G_N^0(x,\alpha,\theta))|$ for the values $N=3,10,30,100,300$.  Here $G(x,\alpha,\theta)$ is the exact value of the distribution function calculated using the representation (\ref{eq:cdfI}), $G_N^0(x,\alpha,\theta)$ is determined by (\ref{eq:GN0}).

From Figures~\ref{fig:cdfa06}b and \ref{fig:cdfa12}b it is clear that the condition  (\ref{eq_x_eps_cond}) is met for all given values of $N$. In these figures the location of the threshold coordinate $x_\varepsilon^N$ is marked with circles and the dotted line corresponds to the specified level of accuracy $\varepsilon$. We can see from the presented figures that for all values of $x$, satisfying the condition $|x|\leqslant x_\varepsilon^N$, both the estimate of the remainder (\ref{eq:RGN0}) (dashed lines), and the exact value of the absolute error (solid curves) are below the specified level of accuracy $\varepsilon$. This confirms the validity of the condition (\ref{eq_x_eps_cond}) and shows that the formula (\ref{eq:x_eps_cdf}) can be used to estimate the boundary value of the coordinate in the expansion (\ref{eq:Gx0}) at which the specified level of accuracy is achieved.

It should be noted that in the case  $\alpha<1$ and $\alpha>1$ the threshold coordinate $x_\varepsilon^N$ behaves differently as the number of summands $N$ in the expansion (\ref{eq:Gx0}) increases. In the case $\alpha<1$ (Fig.~\ref{fig:cdfa06}) an increase in $N$ first increases the threshold coordinate $x_\varepsilon^N$ ($x_\varepsilon^3<x_\varepsilon^{10}$), but with the further increase in  $N$ the threshold coordinate $x_\varepsilon^N$ decreases $(x_\varepsilon^{30}>x_\varepsilon^{100}>x_\varepsilon^{300})$. The threshold coordinate $x_\varepsilon^N$ behaves quite differently in the case $\alpha>1$. In this case with an increase in $N$ the value of the threshold coordinate increases: $x_\varepsilon^{3}<x_\varepsilon^{10}<x_\varepsilon^{30}<x_\varepsilon^{100}<x_\varepsilon^{300}$ (see Fig.~\ref{fig:cdfa12}). Such behavior of the threshold coordinate $x_\varepsilon^N$ is due to the fact that in the case ($\alpha<1$) the series (\ref{eq:GN0}) is divergent, and in the case $\alpha>1$ this series converges (see corollary~\ref{corol:GN0_a><1}).

\begin{figure}
  \centering
  \includegraphics[width=0.45\textwidth]{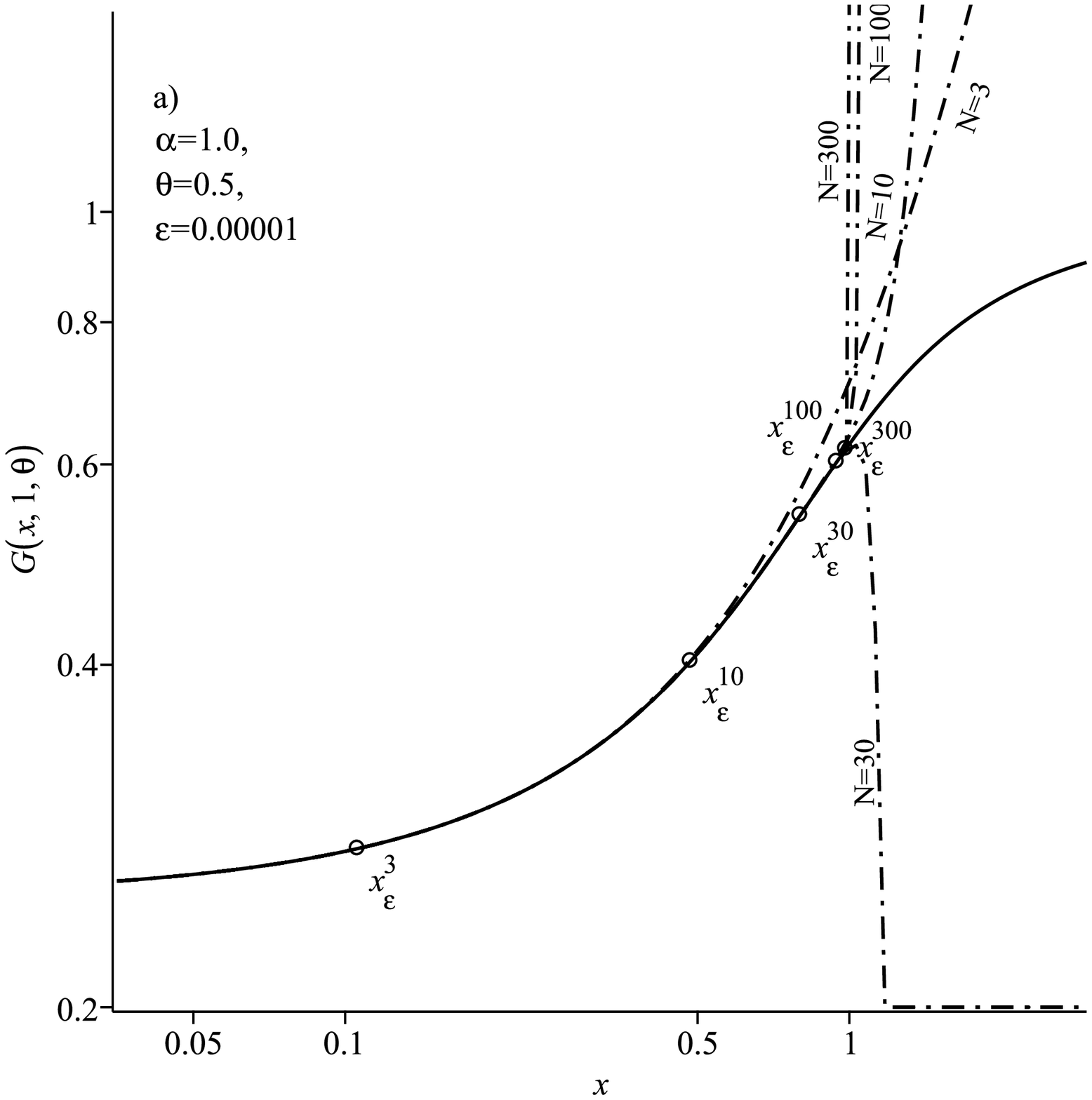}\hfill
  \includegraphics[width=0.45\textwidth]{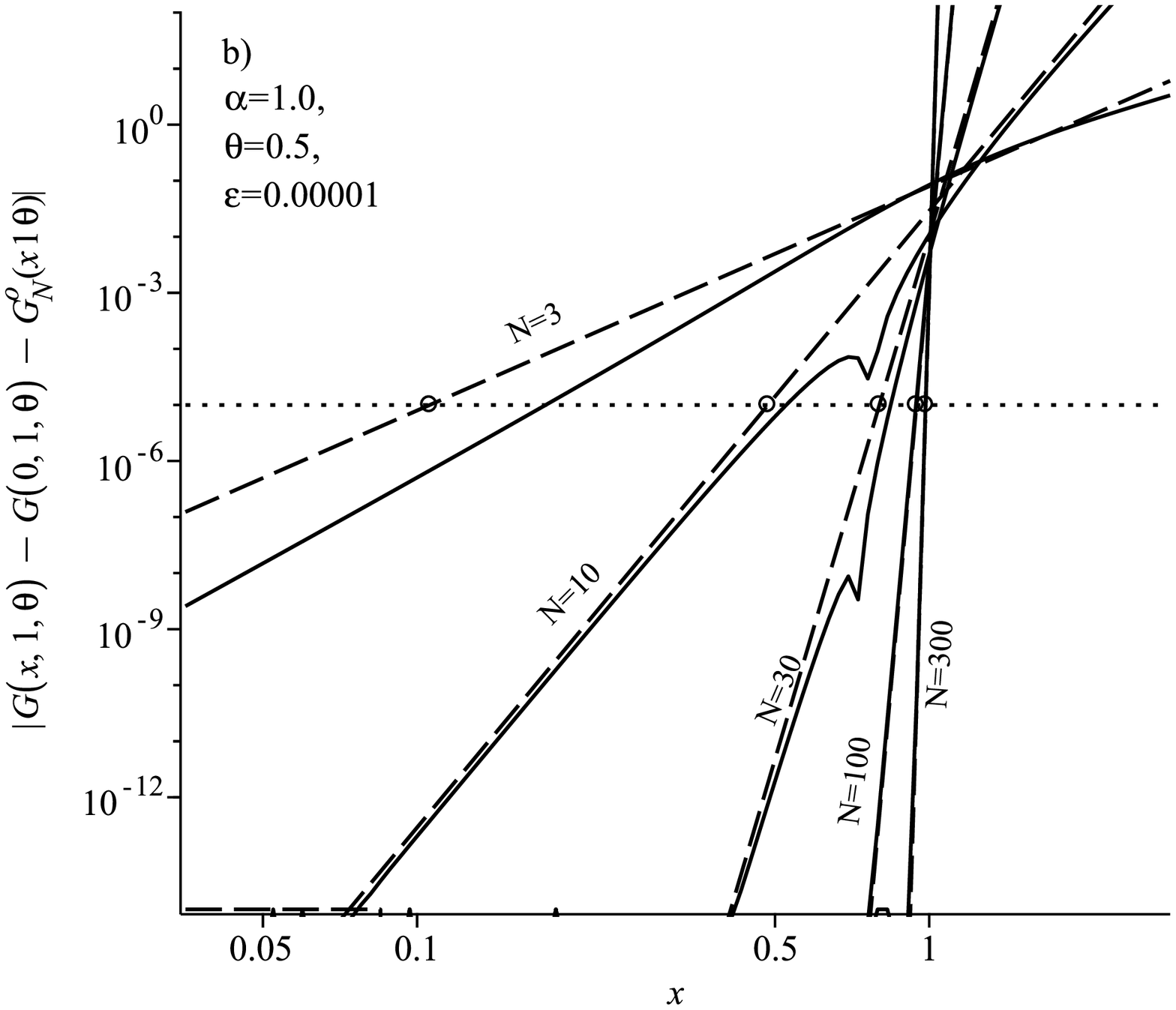}\\
  \caption{(a) Distribution function $G(x,\alpha,\theta)$ for the parameter values shown in the figure.  A solid curve is the formula (\ref{eq:Gx_a1}), dashed curves are the expansion (\ref{eq:Gx0}) for different values of the number of  $N$ summands in total. Circles are the position of the threshold coordinate (\ref{eq:x_eps_cdf}) for each value of $N$. (b) The graph of the absolute error of calculating the distribution function using the expansion (\ref{eq:Gx0}) for the case given in Figure (а).  Solid curves - the exact value of the absolute error $|G(x,\alpha,\theta)- \frac{1}{2}(1-\theta)- G_N^0(x,\alpha,\theta))|$, dashed curves are the estimate (\ref{eq:RGN0}), dotted line – demonstrates the position of the specified accuracy level $\varepsilon$. Circles show the location of the threshold coordinate (\ref{eq:x_eps_cdf}) for the specified values of $N$}\label{fig:cdfa1}
\end{figure}

In the case $\alpha=1$ the threshold coordinate behaves in the same way as the case $\alpha>1$. With an increase in the number of summands of $N$ in the expansion (\ref{eq:Gx0}) the value of the  threshold coordinate $x_\varepsilon^N$ increases. We can see it from Fig.~\ref{fig:cdfa1}, which contains $x_\varepsilon^{3}<x_\varepsilon^{10}<x_\varepsilon^{30}<x_\varepsilon^{100}<x_\varepsilon^{300}$. However, unlike the previous case, $\lim_{N\to\infty} x_\varepsilon^N=1$. Indeed, in the case $\alpha=1$ the formula (\ref{eq:x_eps_cdf}) takes the form $x_\varepsilon^N=(\varepsilon (N+1))^{1/(N+1)}$. Thus,
\begin{equation*}
  \lim_{N\to\infty}x_\varepsilon^N=\lim_{N\to\infty}(\varepsilon (N+1))^{1/(N+1)}= \lim_{N\to\infty}\exp\left\{\frac{\ln(\varepsilon(N+1))}{N+1}\right\}=1.
\end{equation*}

Such behavior of the threshold coordinate $x_\varepsilon^N$ is a consequence of proved corollary~\ref{corol:GN0_a><1}.  Indeed, in the case $\alpha=1$ the series (\ref{eq:GN0}) and, therefore, the representation (\ref{eq:Gx0}) converges in the region $|x|<1$.

The results of calculating the absolute error in the case $\alpha=1$ are given in Fig.~\ref{fig:cdfa1}b. In this figure, the value of the threshold coordinate $x_\varepsilon^N$  for different values $N$ is shown with a circle. We can see from the presented results that for the values  $|x|\leqslant x_\varepsilon^N$ both the estimate of the remainder  (\ref{eq:RGN0}) (dashed lines), and the exact value of the absolute error (solid curves) turn out to be less than specified accuracy level $\varepsilon$ (dotted line). This demonstrates that the use of the formula (\ref{eq:x_eps_cdf}) to estimate the values of the boundary coordinate leads to the validity of the condition (\ref{eq_x_eps_cond}).

\section{ Calculation of the probability density and distribution function for small $x$}\label{sec:calc}

We return to the problem of calculating the probability density of a strictly stable law. As mentioned in the Introduction, the main approach to the calculation of the probability density is to use the integral representation. For a strictly stable law with the characteristic function (\ref{eq:CF_formC}) such an integral representation is determined by the formula (\ref{eq:pdfI}). This formula is valid for any $x\neq0$ and any admissible values of parameters $\alpha$ and $\theta$ except for $\alpha=1$.   However, in practice, it is not possible to calculate the integral in (\ref{eq:pdfI})  numerically for all values of $x$. The reason for this lies in the behavior of the integrand. Figure.~\ref{fig:integrand} shows the graph of the integrand in the formula (\ref{eq:pdfI}) depending on the integration variable  $\varphi$ for different values of $x$. The graph of the function is plotted on a semi-logarithmic scale. We can see from this figure that as the value of $x$ decreases, the integrand turns into a function with a very narrow and sharp peak. With a further decrease in $x$ this peak becomes even narrower and higher. The same behavior of the integrand is also observed for large values of $x$. This leads to the fact that for very small and for very large values of $x$ numerical integration algorithms cannot calculate the integral of this function.

\begin{figure}
  \centering
  \includegraphics[width=0.9\textwidth]{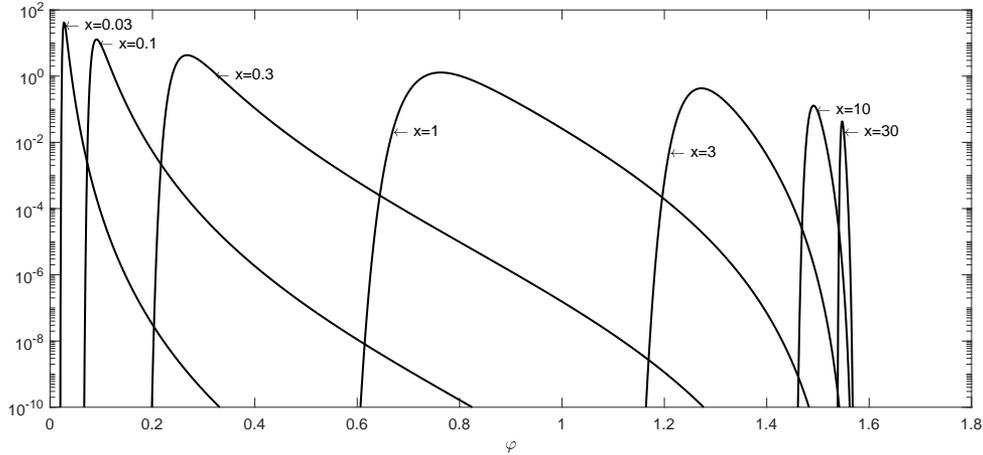}
  \caption{ The integrand of the integral representation (\ref{eq:pdfI}) depending on the variable $\varphi$ for $\alpha=1.1$, $\theta=0$ and specified values of $x$. }\label{fig:integrand}
\end{figure}

Figures~\ref{fig:pdf_a01}~and~\ref{fig:pdf_a12} show the results of calculating the probability density using the integral representation (\ref{eq:pdfI}) (solid curves). Fig.~\ref{fig:pdf_a01} shows the case $\alpha<1$, Fig.~\ref{fig:pdf_a12} shows the case $\alpha>1$.  The Gauss-Kronrod algorithm was used to calculate the integral in the formula (\ref{eq:pdfI}). It is clear from the presented calculations, for small values of $x$ the numerical integration method used is cannot calculate the integral in (\ref{eq:pdfI}). The critical value of the coordinate $x_{\mbox{\scriptsize cr}}$ at which the numerical integration algorithm used begins to produce an incorrect result for $\alpha=0.3$ is $x_{\mbox{\scriptsize cr}}\approx 9\cdot10^{-10}$, for $\alpha=0.6$ $x_{\mbox{\scriptsize cr}}\approx 7\cdot10^{-7}$,  for $\alpha=0.9$ $x_{\mbox{\scriptsize cr}}\approx 3\cdot10^{-4}$ (see~Fig.~\ref{fig:pdf_a01}). In the case $\alpha>1$ (see~Fig.~\ref{fig:pdf_a12}) for the value $\alpha=1.1$ $x_{\mbox{\scriptsize cr}}\approx 3\cdot10^{-6}$, for the value $\alpha=1.4$ $x_{\mbox{\scriptsize cr}}\approx 10^{-10}$, and for the value $\alpha=1.7$ $x_{\mbox{\scriptsize cr}}\approx 10^{-11}$.  Consequently, at $x<x_{\mbox{\scriptsize cr}}$ it is necessary to use other methods to calculate the probability density. The same problem exists for integral representations of the density of stable laws in other parameterizations of the characteristic function (see \cite{Nolan1997,Julian-Moreno2017,Royuela-del-Val2017,Ament2018}). To solve this problem in these works, the authors used various numerical methods, which make it possible to increase the accuracy of the calculation. However, these methods increase the accuracy of calculations, but do not solve the problem completely.

\begin{figure}
  \centering
  \includegraphics[width=0.47\textwidth]{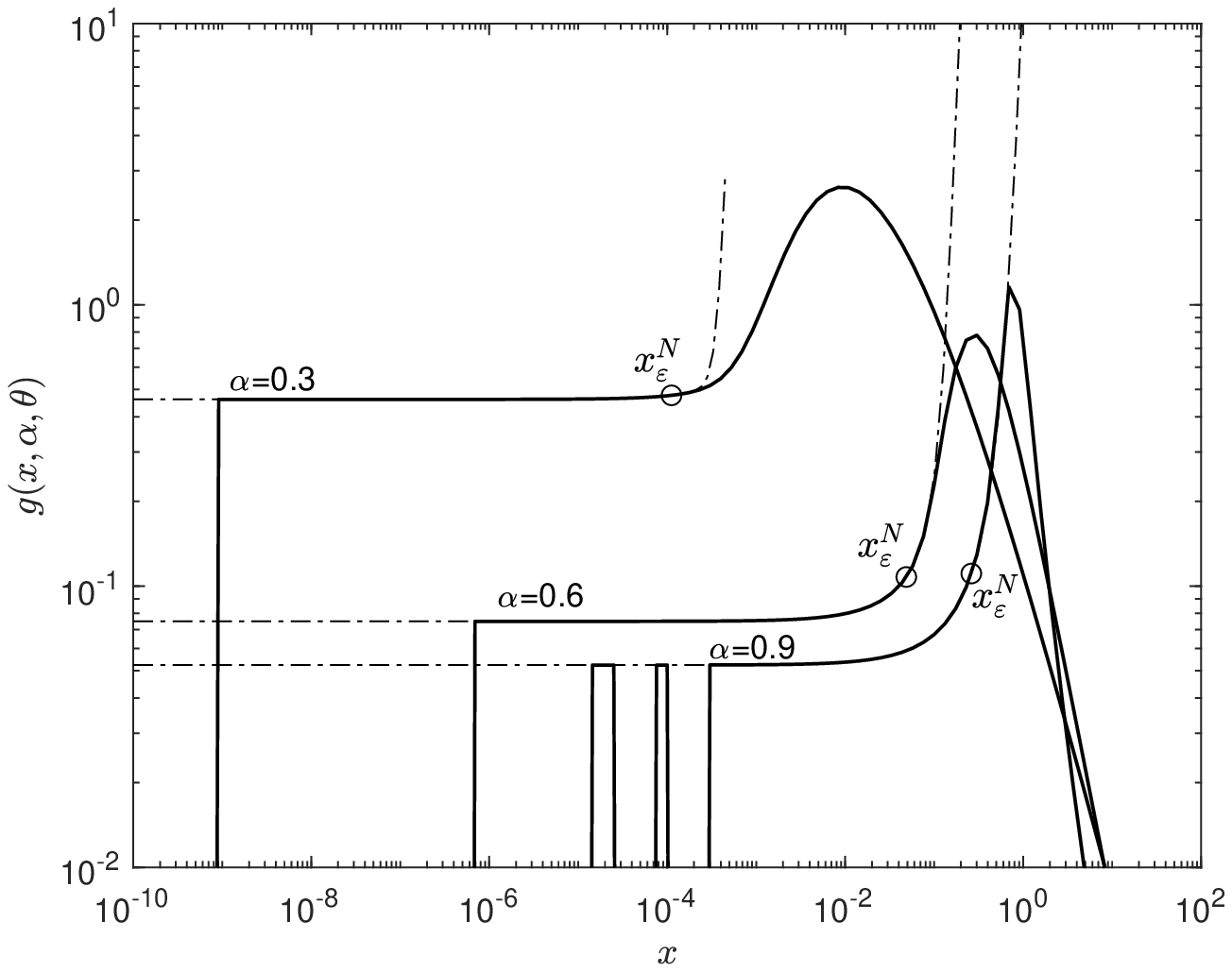}\hfill
  \includegraphics[width=0.47\textwidth]{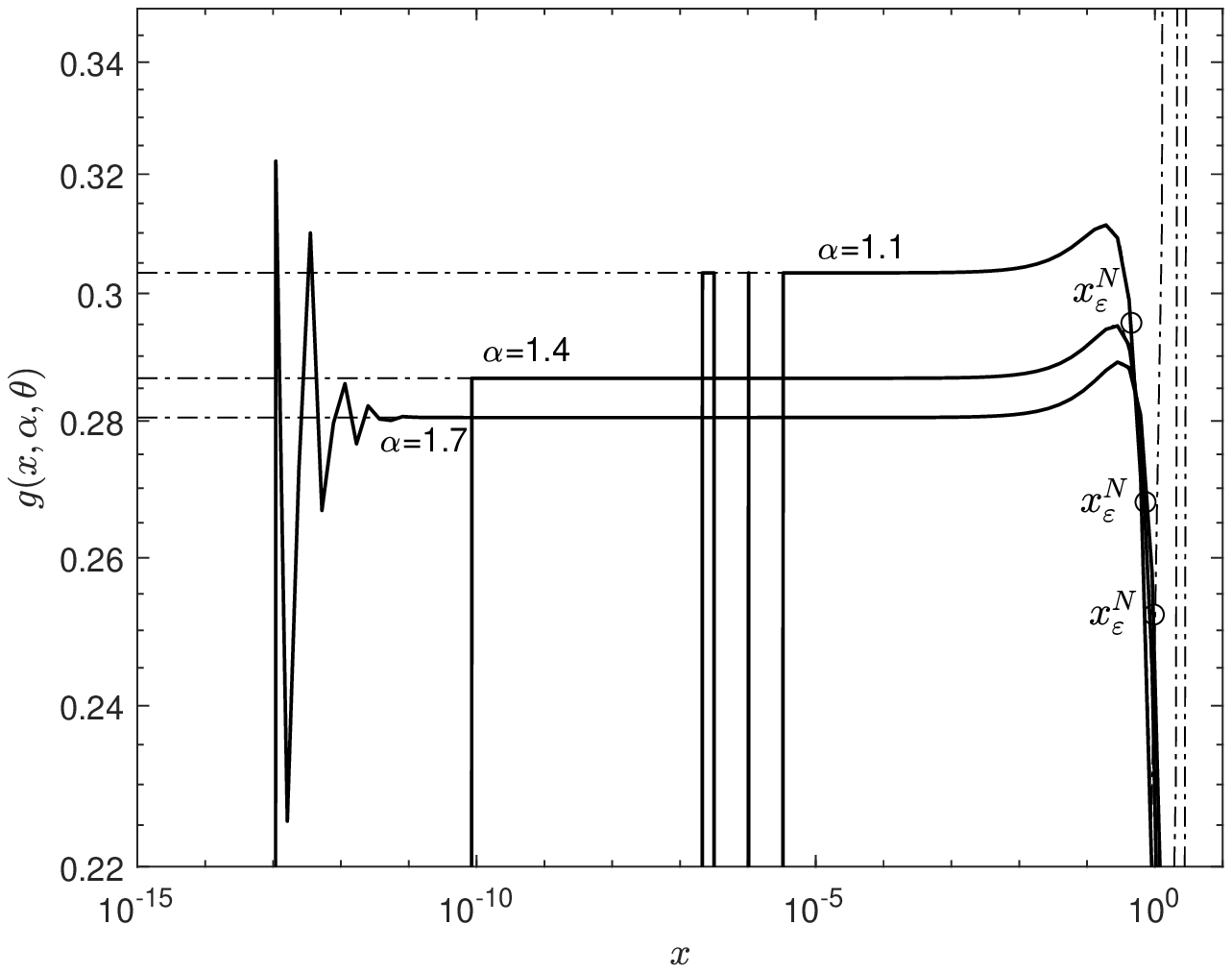}\\
\parbox[t]{0.45\textwidth}{\caption{Probability density in the case $\alpha<1$. Solid curves – the formula (\ref{eq:pdfI}), dash-dotted curves – the representation (\ref{eq:gx0}) for $N=10$, circles – the position of the threshold coordinate $x_\varepsilon^N$ (\ref{eq:x_eps}). While calculating $\varepsilon=10^{-5}$ was used . Calculations are given for the specified values of $\alpha$ and $\theta=0.9$}\label{fig:pdf_a01}}\hfill
\parbox[t]{0.45\textwidth}{\caption{Probability density for the case $\alpha>1$. The notation is the same as in Fig.~\ref{fig:pdf_a01}}\label{fig:pdf_a12}}
\end{figure}

To calculate the density for the values $|x|<x_{\mbox{\scriptsize cr}}$ one should use other representations that do not have any singularities in this area. The most suitable option for this purpose is the power series representation obtained in Theorem~\ref{th:gx0_expan} for the probability density. The estimate of the remainder (\ref{eq:absRN}) obtained in the same theorem  made it possible to obtain the formula  for the threshold coordinate $x_\varepsilon^N$ (\ref{eq:x_eps})  at which the given value of the absolute error $\varepsilon$ is achieved for fixed $N$. This means that in the region $-x_\varepsilon^N\leqslant x\leqslant x_\varepsilon^N$ the absolute error of the density calculation using the series (\ref{eq:gx0}) will not exceed the specified value $\varepsilon$. In Figures~\ref{fig:pdf_a01}~and~\ref{fig:pdf_a12} the dash-dotted curves show the results of calculating the probability density using the series (\ref{eq:gN0}) for the specified values of $\alpha$. The position of the threshold coordinate $x_\varepsilon^N$ is shown with circles. The values $x_\varepsilon^N$ are calculated for the absolute error $\varepsilon=10^{-5}$ and $N=10$. These figures show in the region $x_{\mbox{\scriptsize cr}}\leqslant x\leqslant x_\varepsilon^N$ the results of calculating the probability density using the integral representation (\ref{eq:pdfI}) and using the series (\ref{eq:gN0}) coincide. For the values $|x|\leqslant x_{\mbox{\scriptsize cr}}$ the numerical integration algorithm no longer allows obtaining the correct density value. At the same time, the calculation of the probability density using the series (\ref{eq:gN0}) does not cause any difficulties.  It follows that for the values $|x|\leqslant x_\varepsilon^N$ it is expedient to use the series (\ref{eq:gN0})  to calculate the probability density.        Thus, using theorem~\ref{th:gx0_expan} and, in particular, the series(\ref{eq:gN0}) completely solves the problem of calculating the probability density at $x\to0$.

Similar problems arise when calculating the distribution function using the integral representation (\ref{eq:cdfI}). The integrand in this integral representation also has some singularities at $x\to0$. In the general case, the integrand in (\ref{eq:cdfI}) (see~also~(\ref{eq:cdfI_G+})) behaves in the following way.  In the point of the lower limit $\varphi=-\pi\theta/2$ the integrand is equal to 1, in the point of the upper limit $\varphi=\pi/2$ the value of the integrand is equal to 0. As the variable $\varphi$ increases from the value $-\pi\theta/2$ up to the value $\pi/2$ the integrand decreases monotonically from 1 to 0. However, for very small values $x$ the integrand in (\ref{eq:cdfI_G+}) decreases very sharply from 1 to 0 in a very narrow range $\varphi$. As a result, some numerical integration algorithms cannot recognize such a sharp decrease in the function and give an incorrect integration result. To exclude the possibility of incorrect results completely for small values $x$, it is expedient to use the expansion of the distribution function into a series obtained in theorem~\ref{th:Gx0_expan}. The estimate of the remainder obtained in this theorem made it possible to obtain the formula (\ref{eq:x_eps_cdf}) for the threshold coordinate. The value $x_\varepsilon^N$ enables us to determine the range of $x$ at which the inequality (\ref{eq_x_eps_cond}) is satisfied. In other words, in the range of values $|x|\leqslant x_\varepsilon^N$ the absolute error of calculating the distribution function using the expansion (\ref{eq:Gx0}) will not exceed the value $\varepsilon$, where $\varepsilon$ is given by in advance. Therefore, when calculating the distribution function in the range of values $|x|\leqslant x_\varepsilon^N$ t is expedient to use the expansion (\ref{eq:Gx0}), and for $|x|>x_\varepsilon^N$ the integral representation (\ref{eq:cdfI}).

\section{Conclusion}

The major approach to the calculation of the probability density and the distribution function of stable laws is the use of integral representations. Theoretically, these representations are valid for all values of the coordinate $x$. However, it is not possible to calculate the density numerically for all $x$. Problems arise in the domain of very small and very large values of $x$. Therefore, it is expedient to use other methods for numerical calculations.

The paper considers the problem of calculating the probability density and distribution function in the case of $x\to0$ for a strictly stable law with a characteristic function (\ref{eq:CF_formC}). To solve this problem, expansions of the probability density and distribution function in a power series and estimates of the residual terms for each of the expansions were obtained. Estimates of the threshold coordinates  $x_\varepsilon^N$ were obtained for the expansion of the probability density and the distribution function, which are defined by the expressions (\ref{eq:x_eps})~and~(\ref{eq:x_eps_cdf}), respectively. The threshold coordinate allows one to determine the domain of coordinates $|x|\leqslant x_\varepsilon^N$ within which the absolute computational error will not exceed the specified accuracy level $\varepsilon$. The performed calculations showed that the value of the critical coordinate $x_{\mbox{\scriptsize cr}}$, at which the numerical integration algorithm used starts giving an incorrect result, is significantly less than the threshold coordinate $x_\varepsilon^N$ (see~Fig.~\ref{fig:pdf_a01}~and~\ref{fig:pdf_a12}). This fact shows that in the domain $|x|\leqslant x_\varepsilon^N$ it is possible to use theorems~\ref{th:gx0_expan}~and~\ref{th:Gx0_expan} to calculate the probability density and distribution function.

The analysis of the obtained series made it possible to confirm both the known properties of these series and to establish new properties, as well as to improve the known estimates of the remainder terms. It was shown that in the case $\alpha<1$ the power series were divergent for any $x$ at $N\to\infty$. In this case these series are asymptotic at $x\to0$. In the case $\alpha>1$ the obtained series are convergent for all admissible $x$. In this case, representations in the form of infinite series are valid for the probability density and distribution function (see corollaries~\ref{corol:gN0_a><1}~and~\ref{corol:GN0_a><1}). These results are known and were previously obtained for the characteristic function in parameterization <<B>> in the works \cite{Bergstrom1952}, \cite{Feller1971_V2_en} (see Chapter~17, \S7), \cite{Zolotarev1986} (see \S2.4~and~\S2.5),\cite{Uchaikin1999} (see~\S4.2, \S4.3). It can be shown that the expansions from corollaries~\ref{corol:gN0_a><1}~and~\ref{corol:GN0_a><1} in the cases $\alpha<1$ and $\alpha>1$ completely correspond to the expansions in the mentioned works. Examining the case $\alpha=1$ helped us establish that the expansions of the probability density and distribution function converged to the probability density (\ref{eq:gx_a1}) and the distribution function (\ref{eq:Gx_a1})  in the domain $|x|<1$ for any $-1<\theta<1$ (see Remarks~\ref{rm:pdf_a1}~and~\ref{rm:cdf_a1}).

The paper improves the estimates of the remainder terms in the expansions of the probability density and the distribution function defined by the formulas (\ref{eq:absRN}) and (\ref{eq:RGN0}). The estimate of the remainder term obtained earlier (see~\cite{Zolotarev1986}, formula~(2.5.2)) refers to the expansion of the probability density in parameterization  <<B>> and in the case of $\alpha<1$ has the form
\begin{equation}\label{eq:RN_zlt}
  |R_N|\leqslant\frac{1}{\pi\alpha}\frac{\Gamma\left(\frac{N+1}{\alpha}\right)}{N!} \left(\cos\left(\frac{\pi}{2}\alpha\beta\right)\right)^{-(N+1)/\alpha}.
\end{equation}
We will take the relation $\theta=\beta K(\alpha)/\alpha$ into account, where $K(\alpha)=\alpha-1+\sign(1-\alpha)$, which relates the asymmetry parameter $\beta$ in parameterization <<B>>  to the asymmetry parameter $\theta$ in parameterization <<C>>. In the case $\alpha<1$ this relation gives $\beta=\theta$. Now comparing (\ref{eq:RN_zlt}) and (\ref{eq:absRN}) we see that
\begin{equation*}
|R_N^0|\leqslant|R_N|.
\end{equation*}
The sign of equality is achieved here only in the case $\beta=0$.

Finishing this paper the following should be noted. In the previous article \cite{Saenko2020b} it was noted that when calculating the integral in the representation (\ref{eq:pdfI}) numerical integration algorithms have difficulties in the domain of small values of the coordinate $x$, in the domain of large values of the coordinate $x$ and in the domain of values of the characteristic parameter $\alpha\approx1$. The first problem out of these three ones has been solved in this paper. The assumption made in the work \cite{Saenko2020b} that  the cause of the problem is associated with the behavior of the integrand in (\ref{eq:pdfI}), was correct.
Indeed, this integrand at smaller values of $x$ starts acting as a singular function which makes it impossible to use for numerical algorithms to calculate the integral of it. Therefore, to calculate the density in the indicated domain $x$ it is necessary to use series expansions of the density.  The reason for the calculation difficulties in the second case is also the behavior of the integrand in the representation (\ref{eq:pdfI}). As shown in section~\ref{sec:calc} with large $x$ it behaves as a singular function. Therefore, to calculate the density in this domain of the coordinate, it is also expedient to use expansions of the density in a series.  To solve the third problem, one can use the method proposed in the paper \cite{Matsui2006}. In this paper, to calculate the density, the authors propose to use the series expansion of a strictly stable law in view of the parameter $\alpha$. However, the solution of each of the remaining two problems requires further research, which is beyond the scope of this paper.

\AcknowledgementSection

The project has been done under financial support of the Russian Foundation for Basic Research (grants \No  19-44-730005 and 20-07-00655)

\appendix

\section{ Approximation of the gene expression by fractionally stable laws}
It was mentioned in the introduction that the obtained expansions of the probability density of a strictly stable law are useful in problems related to the calculation of the probability density of a fractionally stable law. Indeed, the probability density of the fractionally stable law $q(x,\alpha,\beta,\theta)$ is determined by the Mellin transform of two strictly stable laws
\begin{equation}\label{eq:fsdPDF}
  q(x,\alpha,\beta,\theta)=\int_{0}^{\infty} g(xy^{\beta/\alpha},\alpha,\theta,\lambda)g(y,\beta,1)y^{\beta/\alpha}dy,
\end{equation}
where $g(x,\alpha,\theta)$ and  $g(y,\beta,1)$ -- densities of strictly stable and one-sided strictly stable laws with the characteristic function (\ref{eq:CF_formC}) \cite{Kolokoltsov2001,Bening2006,Saenko2020c}. Here, the characteristic exponents $\alpha$ and $\beta$ vary within $0<\alpha\leqslant2$ and $0<\beta\leqslant1$, the asymmetry parameter $\theta$ takes the values within the interval $|\theta|\leqslant\min(1,2/\alpha-1)$ and $\lambda>0$ is the scaling parameter.

From the formula (\ref{eq:fsdPDF}) it is clear that it is necessary to be able to calculate densities of the strictly stable laws $g(y,\alpha,\theta,\lambda)$ and $g(y,\beta,1)$ for the calculation of density $q(x,\alpha,\beta,\theta,\lambda)$. The integral representation (\ref{eq:pdfI}) is used to calculate these densities.  In this regard, at this stage, certain problems may arise with the calculation of the improper integral in (\ref{eq:fsdPDF}). Indeed, to calculate the integral in (\ref{eq:fsdPDF}) the numerical integration algorithm calculates the integrand at some integration nodes $y_i$. If it turns out that the next integration node $y_i$ is less than the value of the critical coordinate $y_{\mbox{\scriptsize cr}}$, i.е. $|y_i|<y_{\mbox{\scriptsize cr}}$, then the numerical integration algorithm will be unable to calculate the densities $g(y_i,\alpha,\theta)$ and $g(y_i,\beta,1)$. This will lead to an incorrectly calculated density value $q(x,\alpha,\beta,\theta)$. To eliminate this problem and to calculate the densities $g(y_i,\alpha,\theta)$ and $g(y_i,\beta,1)$ in the case $|y_i|<y_{\mbox{\scriptsize cr}}$ it is expedient to use the expansion (\ref{eq:gN0}). Thus, the use of the expansion (\ref{eq:gN0}) will give an opportunity to exclude the integration error associated with the singular behavior of the integrand in the representation (\ref{eq:pdfI}) at small values of $y$.
\begin{figure}
  \centering
  \includegraphics[width=0.47\textwidth]{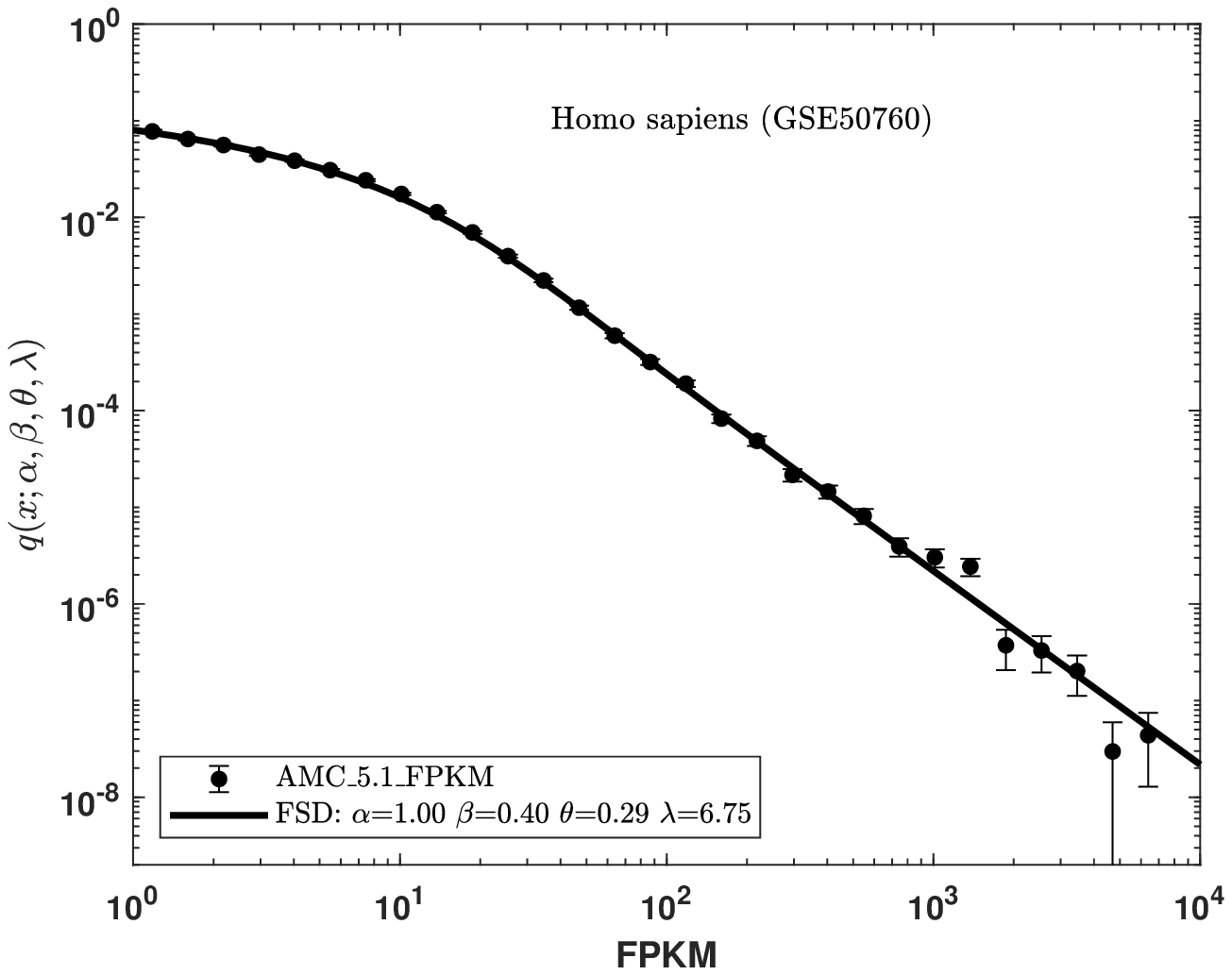}\hfill
  \includegraphics[width=0.47\textwidth]{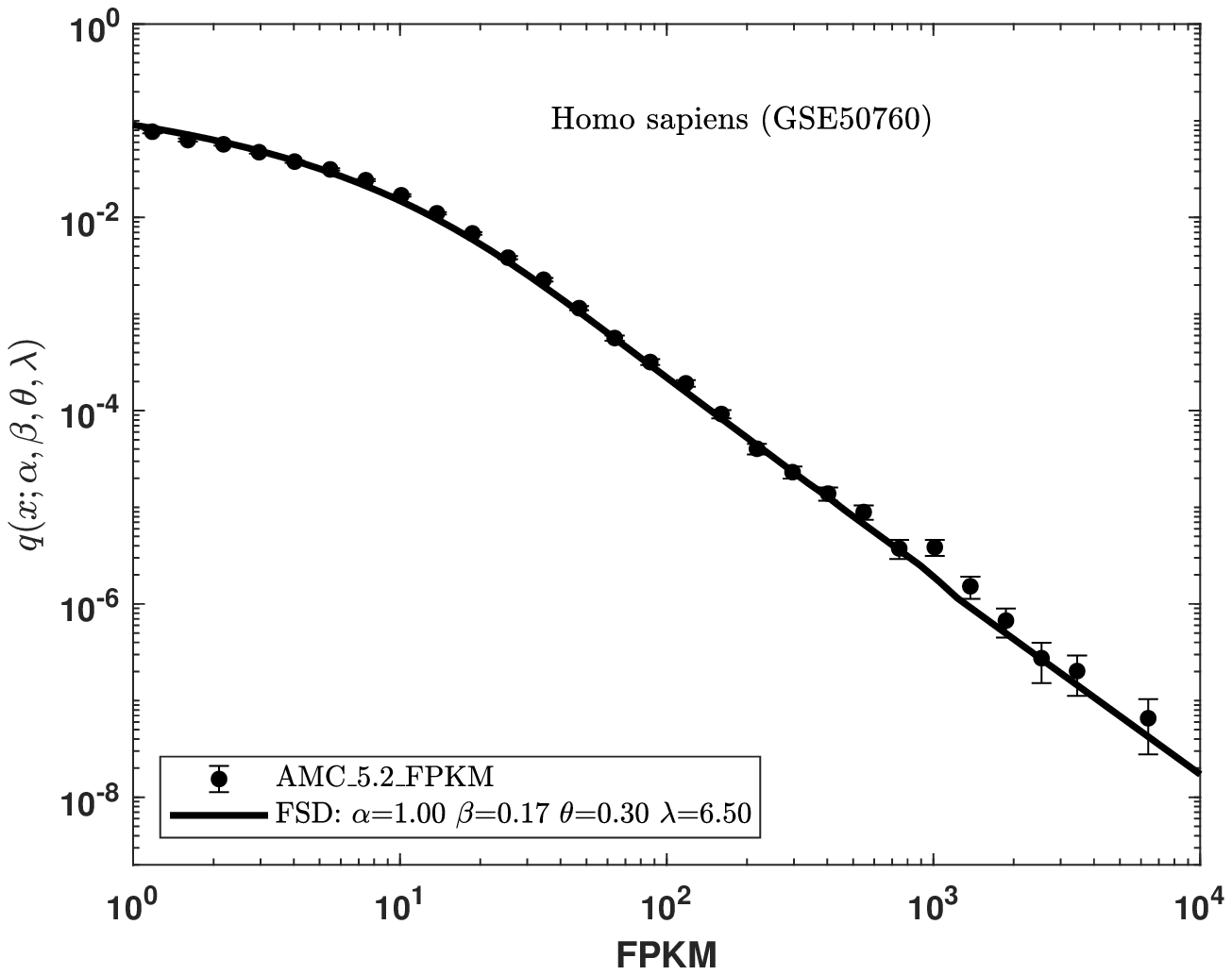}\\
  \caption{Probability density of gene expression obtained with the use of NGS technology. Dots are density histogram of the experimental data on human tissue gene expression. (GSE50760 series), a solid curve is the density (\ref{eq:fsdPDF}) for the specified parameter values}\label{fig:GeneExpress}
\end{figure}

Fractionally stable distributions turn out to be a convenient tool to describe the probability density distribution of gene expression obtained by means of NGS technology. Fig.~\ref{fig:GeneExpress} shows the approximation results of the probability density distribution of gene expression obtained with NGS technology using the density of a fractional stable law.  In these figures, the dots are the histogram of the probability density of gene expression, the solid curve is the density (\ref{eq:fsdPDF}). The density parameters $q(x,\alpha,\beta,\theta,\lambda)$ are shown in the figures. As we can see from these figures the density $q(x,\alpha,\beta,\theta,\lambda)$ approximates the experimental data quite well in a very wide range of values.

It should be noted that the parameters $\alpha,\beta,\theta,\lambda$, given in Fig.~\ref{fig:GeneExpress} were estimated from the experimental data using the minimum distance method, which is based on the distance $\chi^2$ \cite{Saenko2016}. However, this method of parameter estimation is not effective. To build an effective estimate of the parameters of fractionally stable distributions, it is necessary to build an estimate based on the maximum likelihood method. Until now, the creation of such an estimate has met with some difficulties.They are related to the fact that to calculate the likelihood function, it is necessary to calculate the density of the fractionally stable law at the points determined by the original data sample. Taking into account that the density $q(x,\alpha,\beta,\theta,\lambda)$ is calculated according to the formula (\ref{eq:fsdPDF}), then at small values of the coordinate $x$ or or the integration variable $y$ the algorithm numerical integration gave the wrong result. This, in turn, led to an error in estimating the distribution parameters. Thus, the use of the expansion (\ref{eq:gN0}) when calculating the integral in (\ref{eq:fsdPDF}) will give an opportunity to calculate the density correctly, which in turn will allow implementing the methods for estimating the parameters of fractionally stable and strictly stable distributions based on the maximum likelihood method.

\bibliographystyle{elsarticle-num}
\bibliography{d:/bibliography/library}
\end{document}